\newcommand*{\boxwedge}{%
  \mathbin{%
    \mathpalette\@boxwedge{}%
  }%
}
\newcommand*{\@boxwedge}[2]{%
  % #1: math style
  % #2: unused
  \sbox0{$#1\boxplus\m@th$}%
  \dimen2=.5\dimexpr\wd0-\ht0-\dp0\relax % side bearing
  \dimen@=\dimexpr\ht0+\dp0\relax
  \def\lw{.06}% linw width as factor for height of \boxplus
  \kern\dimen2 % side bearing
  \tikz[
    line width=\lw\dimen@,
    line join=round,
    x=\dimen@,
    y=\dimen@,
  ]
  \draw
    (\lw/2,0) rectangle (1-\lw,1-\lw)
    (\lw,0) -- (.5,1-\lw-\lw/2) -- (1-\lw-\lw/2 ,0)
  ;%
  \kern\dimen2 % side bearing
}
\theoremstyle{plain}
\newtheorem{theorem}{Theorem}[section]
\newtheorem{lemma}[theorem]{Lemma}
\theoremstyle{definition}
\newtheorem{remark}[theorem]{Remark}
\newtheorem{example}[theorem]{Example}
\newtheorem{definition}[theorem]{Definition}
\newcommand{\ts}{\hspace{0.5pt}}
\newcommand{\nts}{\hspace{-0.5pt}}
\newcommand{\RR}{\mathbb{R}\ts}
\newcommand{\PP}{\mathbb{P}\ts}
\newcommand{\ZZ}{{\ts \mathbb{Z}}}
\newcommand{\NN}{\mathbb{N}}
\newcommand{\EE}{\mathbb{E}}
\newcommand{\cA}{\mathcal{A}}
\newcommand{\cB}{\mathcal{B}}
\newcommand{\cN}{\mathcal{N}}
\newcommand{\cO}{\mathcal{O}}
\newcommand{\cR}{\mathcal{R}}
\newcommand{\cS}{\mathcal{S}}
\newcommand{\cT}{\mathcal{T}}
\newcommand{\cX}{\mathcal{X}}
\newcommand{\one}{\mathbbm{1}}
\newcommand{\bP}{{\textbf{P}}}
\newcommand{\fm}{{\mathfrak{m}}}
\newcommand{\tA}{{\texttt{A}}}
\newcommand{\tC}{{\texttt{C}}}
\newcommand{\tG}{{\texttt{G}}}
\newcommand{\tT}{{\texttt{T}}}
\newcommand{\udo}[1]{\underaccent{$\text{.}$}{#1\ts}\nts}
\definecolor{gre}{rgb}{.06,.49,0.03} % hexa 107d08
\DeclareMathOperator{\supp}{supp}
\DeclareMathOperator{\id}{Id}
\newcommand{\defeq}{\mathrel{\mathop:}=}
\tikzset{
    cross/.pic = {
    \draw[rotate = 45] (-#1,0) -- (#1,0);
    \draw[rotate = 45] (0,-#1) -- (0, #1);
    }
}
\begin{document}

\title[Loose linkage in the ARG] {Asymptotic sampling distributions made easy: loose linkage in the ancestral recombination graph}

\author{F. Alberti}
\address[F. Alberti]{Institute for Mathematics,  Johannes Gutenberg University Mainz, \newline
\hspace*{\parindent}55128 Mainz, Germany}
\email{fralbert@uni-mainz.de}

\begin{abstract}
Understanding the interplay between recombination and genetic drift is a significant challenge in mathematical population genetics and of great practical interest.
Asymptotic results about the distribution of samples when recombination is strong compared to genetic drift are often based on the approximate solution of certain recursions, which is technically hard and offers little conceptual insight.
This work generalises an elegant probabilistic argument, based on the coupling of ancestral processes but so far only available in the case of two sites, to the multilocus setting. This offers an alternative route to, and slightly generalises, a classical result of Bhaskar and Song.
\end{abstract}

\maketitle

\noindent \emph{Keywords:} population genetics; sampling distribution; coalescent with recombination.

\bigskip

\noindent \emph{MSC:} 92D15; 60J95.

\section{Introduction}
Recombination is a genetic mechanism that reshuffles genetic information between parent individuals in the context of sexual reproduction, and plays an important role in the creation and maintenance of genetic diversity. Due to their inherent nonlinearity, mathematical models for recombination have been a major challenge for mathematical population geneticists since their conception more than one hundred years ago~\cite{Jennings1917,Robbins1918}.  

A powerful idea in modern mathematical population genetics is to understand the effect of evolution on the distribution of a sample by considering its evolutionary history backward in time by means of \emph{ancestral processes}. In particular, the \emph{ancestral recombination graph} (ARG)~\cite{Hudson83,GriffithsMarjoram96,GriffithsMarjoram97,JenkinsFearnheadSong2015} describes the coupled ancestries of multiple genetic sites under recombination. 

When neglecting the effect of random genetic drift\footnote{In population genetics, \emph{genetic drift} refers to random fluctuations in type frequencies due to variability in family sizes. The reader should not confuse this with the drift term of a diffusion.}, the evolution of the genetic composition forward in time can be described by a system of coupled, nonlinear ordinary differential equations~\cite{Buerger2000}.
Backward in time, the genealogy of a sample is described by an ARG which is still random, but simplifies considerably in this setting;
genetic drift corresponds to coalescence of lineages in the ARG and its absence therefore implies their conditional independence. Indeed, it was shown that the ARG reduces to a simple \emph{partitioning process}~\cite{haldane} that describes the distribution of an individual's genetic heritage across its ancestors. This led to a linearisation of the nonlinear forward-time model and an explicit solution. Later, this approach was extended to include other evolutionary forces such as selection~\cite{AlbertiBaake21, AlbertiBaakeHerrmann21}, 
migration~\cite{AlbertiBaakeLetterMartinez21} as well as mutation and more~\cite{Alberti22p}. See also~\cite{recoreview} for a very readable survey.

In the majority of natural populations, however, genetic drift plays an important role and thus, coalescence is an integral part when modelling sample genealogies via the ARG. Unfortunately, explicit, closed formulae for sampling probabilities are not known in this case. Thus, much effort has been put into the development of efficient computational algorithms; as of today, there exist pre-built software packages like msprime that allow for the efficient simulation of the ARG even for a large number of loci~\cite{EtheridgeKelleherMcVean2016,KelleherLohse2020,Baumdickeretal2022}. However, the state space of potential genealogies is so large that parameter inference still remains computationally expensive. Simulating the ARG conditional on the configuration of a sample is similarly expensive. Finding efficient computational solutions to these two problems remains an active area of investigation to this day. See also~\cite{GriffithsMarjoram96, FearnheadDonnelly2001, GriffithsJenkinsSong2008, JenkinsGriffiths2011,KuhnerYamatoFelsenstein2000,Nielsen2000,WangRannala2008, Rasmussen2014,BoitardLoisel2007,Miura2011} for background.

At the same time, \emph{approximate} analytic formulae and expansions for the sampling distribution are available when recombination is strong compared to 
genetic drift~\cite{JenkinsSong2009,JenkinsSong2010,BhaskarSong2012,JenkinsSong2012,JenkinsFearnheadSong2015}.
Much of this work is based on a recursive representation for the sampling distribution that  goes  back to Golding~\cite{Golding1984}; 
see also~\cite{EthierGriffiths1990}. In particular, this includes the work~\cite{BhaskarSong2012} of Bhaskar and Song, which seems to be the only work to date that
treats more than two sites. It is based on a generalisation of Golding's recursion to more than two sites. The authors show that a certain ansatz satisfies Golding's recursion up to a certain order. This is technically challenging and, more importantly, it is not easy to see where the ansatz comes from. While the authors give some probabilistic interpretation in terms of Wallenius' noncentral hypergeometric distributions, there is no clear connection to the underlying ancestral structure.

Therefore, it seems desirable to find an alternative approach that exploits the genealogical structure of the problem more efficiently. 
This was accomplished in~\cite{JenkinsFearnheadSong2015} for the case of two sites. Based on the insight that for `infinitely' strong recombination, the ARG reduces to a collection of independent Kingman coalescents, one for each site, the authors constructed a coupling between such a collection and the full ARG.
This coupling allowed for an easy derivation of the first-order correction in the reciprocal of the recombination rate. This correction can, in, fact, be expressed in terms of single-site sampling distributions, in a way that does not depend on the mutation model used; this property was called \emph{universality} by the authors.  

The main goal of this work is to provide an alternative proof of the general result of Bhaskar and Song~\cite{BhaskarSong2012} that yields deeper insight into 
the probabilistic structure of the problem. Incidentally, this allows us to generalise their result from single-crossover recombination to arbitrary recombination patterns without additional effort. While this level of generality may not seem particularly relevant in view of applications where one is mostly concerned with single-crossover, it serves to showcase the power and flexibility of ancestral models for recombination, in the tradition 
of~\cite{bb,haldane,recoreview}.

Inspired by the argument in~\cite{JenkinsFearnheadSong2015}, which was based on an \emph{untyped} version of the ARG, the main difference is that we will rely on a \emph{typed} version of the ARG; an important tool is its efficient description as a Markov process with values in the finite point measures on the type space. Such typed coalescents have been investigated for population evolving under general genic selection~\cite{EtheridgeGriffiths2009}, also in the large-population limit with asymptotically infinite offspring variance~\cite{EtheridgeGriffithsTaylor2010}. For a nice monograph on measure-valued population processes and their duals, see~\cite{DawsonGreven2014}. The major novelty of our approach is that rather than assigning fixed types to the ancestral lineages, we introduce a notion of \emph{fuzzy types} that will allow us to track the multitude of possible ancestral types that lead to a given sample.

The rest of the paper is organised as follows. To make the presentation as self-contained as possible, we start by recalling the ARG and the associated \emph{sampling-distribution}. In Section \ref{sec:marg}, we describe the typed, measure-valued ARG, along with a version for infinitely strong recombination.
In Section~\ref{sec:coupling}, we couple these two processes and derive the first-order correction in terms of the single-site sampling distributions and probabilities of certain events in this coupling. These probabilities are evaluated in Section~\ref{sec:taming} via an elementary inclusion-exclusion argument, which ultimately leads to the formula given in~\cite{BhaskarSong2012}.

\section{The ancestral recombination graph and the distribution of a sample}\label{sec:informalarg}

\subsection{Basic notions and notations} \label{subsec:notation}
We are interested in the genotype composition of a finite random sample from a stationary population that has evolved under recombination, 
site-independent mutation and genetic drift. We assume that it consists of infinitely many haploid individuals, where \emph{haploid} means that each individual carries  a single set of genes only. In particular, genotypes are identified with genetic sequences. We denote the set of genetic sites by
\begin{equation*}
S = \{1,\ldots,n\},
\end{equation*}
and the set of potential alleles at site $i \in S$ by $X_i^{}$. We will assume the $X_i^{}$ to be finite. Generic alleles at site $i$ will be denoted  by $x_i^{},y_i^{},z_i^{},\ldots$. 
With this, 
\begin{equation*}
X \defeq \prod_{i \in S} X_i
\end{equation*}
is the set of \emph{complete genotypes}. 
In order to describe samples in which individuals may be observed at different  subsets of sites, 
we also define for any $A \subseteq S$ the set
\begin{equation*}
X_A^{} \defeq \prod_{i \in A} X_i^{}
\end{equation*}
of  \emph{partial genotypes observed at $A$}; for $A = \varnothing$, we set $X_\varnothing \defeq \{\epsilon\}$, where $\epsilon$ is the empty sequence. The set of \emph{partial genotypes} is then given by the disjoint union
\begin{equation*}
\cX \defeq \dot{ \bigcup_{A \subseteq S}} X_A^{}.
\end{equation*}
For simplicity, we will refer to the elements of $\cX$ as \emph{types} and denote, for any $x \in \cX$, by $D(x)$ the unique $A \subseteq S$ with
$x \in X_A^{}$. We say that an individual of type $x$ is \emph{observed at $D(x)$}.

It is important to bear in mind that $X_A^{} \cap X_B^{} = \varnothing$ if $A \neq B$. This is true even if $A$ and $B$ are singletons and all the alphabets $X_i$ agree; for this, we need to make (for any $i \in S$) the distinction between the set $X_{ \{ i    \} }$ of types observed at site $i$ and the set $X_i$ of potential alleles at site $i$. Formally, the former may be viewed as the set of mappings from $\{ i \}$ to $X_i$. 
\begin{example}
Here and in all later examples, we will write types as finite words, where unobserved sites are marked as $\ast$. So, for $n = 3$ and, say,
$X_1 = X_2 = X_3 = \{\texttt{A}, \texttt{C}, \texttt{G}, \texttt{T} \}$, we have 
$X_{\{ 1 \}} = \{ \texttt{A}\ast \ast , \texttt{C} \ast \ast , \texttt{G} \ast \ast, \texttt{T} \ast \ast    \}$, which is clearly distinct from
$X_{\{ 2 \}} = \{ \ast \texttt{A} \ast, \ast \texttt{C} \ast, \ast \texttt{G} \ast,  \ast \texttt{T} \ast \}$.
\end{example}

Type frequencies within samples are represented as finite counting measures on $\cX$. Generally, we denote the set of finite counting measures on a set $M$ by
\begin{equation*}
\cN(M) \defeq \Big \{ \sum_{x \in \cT} \nu_x^{} \delta_x^{} : \cT \subseteq M \textnormal{ finite}, \nu_x^{} \in \NN \Big \},
\end{equation*}
where $\delta_x^{}$ is the point mass (or Dirac-measure) in $x$ and $\NN$ does not include $0$. The set $\cT \subseteq M$ is called the \emph{support} ($\supp(\nu)$) of 
$\nu = \sum_{x \in \cT} \nu_x^{} \delta_x^{}$, and the total mass of $\nu$ is denoted by $\|\nu\| \defeq \sum_{x \in \cT} \nu_x^{}$.
%The unit point masses are also referred to as \emph{particles}.

We need some additional notation regarding types and type compositions. For $x \in \cX$ and $i \in D(x)$, we write $x(i)$ for the  allele at site $i$, i.e. $x = (x(i))_{i \in D(x)}$. Moreover, given $x \in \cX$ and $B \subseteq S$, we write $x|_{B}^{}$ for the \emph{marginal of $x$ with respect to $B$}, that is, the type
$y \in X_{D(x) \cap B}^{}$ with $y(i)^{} = x(i)^{}$ for all 
$i \in D(x) \cap B$. We will also write $x|_i^{}$ instead of  $x|_{\{i\}}^{}$ to save brackets. Note again the subtle difference between $x(i)$ and $x|_i$.
The former is the allele at site $i$, that is, an element of $X_i$. The latter is a type observed at site $i$, i.e., an element of $X_{\{i\}}$.

We call two types $x,y \in \cX$ \emph{compatible} 
if $x|_{D(x) \cap D(y)}^{} = y|_{D(x) \cap D(y)}^{}$, and \emph{incompatible} otherwise. 
Note that $x$ and $y$ are always compatible if 
$D(x) \cap D(y) = \varnothing$.
For compatible types, we define their \emph{join} $x \sqcup y \in X_{D(x) \cup D(y)}^{}$ via
$(x \sqcup y)(i) \defeq x(i)$ for $i \in D(x)$ and $(x \sqcup y)(i) \defeq y(i)$ for $i \in D(y)$; of course, the join is not well-defined if $x$ and $y$ are incompatible.

For $\nu \in \cN(\cX)$ and $A \subseteq S$, we write $\nu^{\supseteq A}$ for the restriction of $\nu$ to $\bigcup_{B \supseteq A} X_B$. Given 
$B \subseteq S$, we write $\nu^{B}$ for the marginal of $\nu$ w.r.t. $B$, defined for all $E \subseteq X_B^{}$ via
\begin{equation*}
\nu^{B} (E) \defeq \sum_{\substack{x \in \cX \\ x|_B^{} \in E}} \nu(x).
\end{equation*}
Finally, we write $\nu^{\supseteq A,B}$ for the marginal of $\nu^{\supseteq A}$ with respect to $B$. In words, $\nu^{\supseteq A}$ describes the subsample consisting of individuals that are observed at a superset of $A$, while $\nu^{B}(x_B^{})$ for $x_B^{} \in X_B$ is the number of individuals that have the marginal type $x_B^{}$ with respect to $B$. Note that $\nu^{\supseteq B, B} = \nu^B$ and that $\nu^\varnothing (\epsilon)$ is the total mass $\|\nu\|$ of 
$\nu$. Let us consider an example to illustrate this notation.
\begin{example} \label{ex:marginalcomputation}
Let $n = 3$ and hence $S = \{1,2,3\}$, $X_1 = X_2 = X_3 =  \{\tA,\tC,\tG,\tT \}$. 
Consider $\nu = 2 \delta_{\tA \tG \tC} + \delta_{\ast \tC \tA} + \delta_{\tA \ast \tA} + \delta_{\tC \tG \ast} + \delta_{\tA \ast \ast}$.
Then, for $A = \{1,2\}$, $B = \{2\}$,
\begin{equation*}
\begin{split}
\nu^{\supseteq A} &= 2 \delta_{\tA \tG \tC} + \delta_{\tC \tG \ast}, \\
\nu^B &= 3 \delta_{\ast \tG \ast} + \delta_{\ast \tC \ast}, \\
\nu^{\supseteq A, B}  &= 3 \delta_{\ast \tG \ast}.
\end{split}
\end{equation*}
 
\end{example}

To describe the effect of recombination, we use partitions of $S$ to describe the fragmentation of genetic material of individuals across their parents (this usage is as in~\cite{haldane}). Recall that a \emph{partition of a set $M$} is a collection of nonempty, mutually disjoint subsets of $M$, called \emph{blocks}, that cover $M$. The set of all partitions of $S$ is denoted by $\bP(S)$; for any $\cA \in \bP(S)$ and for any nonempty $B \subseteq S$, we call
\begin{equation*}
\cA|_B^{} \defeq \{ A \cap B : A \in \cA \} \setminus \{ \varnothing \}
\end{equation*}
the partition \emph{induced by $\cA$ on $B$}.

We assume that mutation occurs independently at different sites and with rate $u_i^{} \geqslant 0$ at site $i$. Upon mutation at site $i$, and for generic alleles
$x_i^{}, y_i^{} \in X_i$, we write 
$M_i^{} (x_i^{},y_i^{})$ for the probability that an allele $x_i^{} \in X_i^{}$ is replaced by $y_i^{} \in X_i^{}$. We call $M_i^{}$ the \emph{mutation kernel at site $i$}; as the $X^{}_i$ are finite, $M_i$ can be interpreted as a Markov matrix and $u_i^{} (M_i - \id)$ is a generator. We call the associated continuous-time Markov chain on $X_i^{}$ the \emph{mutation chain} at site $i$. In order to guarantee asymptotically stable allele frequencies, we will assume that the $M_i^{}$ are irreducible. This implies that the mutation chain at any site $i$ converges to its unique stationary distribution $\pi_i^{}$, given by the unique left eigenvector of $M_i^{}$ with respect to the eigenvalue $1$. 

We close by summarising our notation for easy reference. The set of types is denoted by $\cX$, generic types as $x,y,z,\ldots$. The set over which a type $x$ is observed is $D(x)$, and for each $i \in D(x)$, $x(i)$ is the allele carried by an individual of type at site $i$. Marginal types are written as 
$x|_A^{}$ and marginal types with respect to singletons $x|_i^{}$ must be distinguished from the allele $x(i)$ at site a in a type $x$, and from generic elements $x_i^{},y_i^{},z_i^{},\ldots$ of $X_i$. Whenever we want to indicate singleton types that are observed at a particular site $i$, we will write $x^i,y^i,z^i,\ldots$.

\subsection{The ancestral recombination graph} \label{subsec:arg}

To understand how the composition of a sample is affected by recombination, mutation and genetic drift, we will consider its evolutionary history,  which is captured by the \emph{ancestral recombination graph} (ARG); see~\cite{Hudson83,GriffithsMarjoram96,GriffithsMarjoram97,JenkinsFearnheadSong2015}. 
We give a somewhat informal graphical description.

A sample from the population at present is represented by a finite set of leaves and their ancestral lines as vertical lines, starting  from the leaves and growing from bottom to top. We call the ancestral line associated with a leaf observed at $A \subseteq S$ \emph{ancestral to $A$}. Somewhat abusively, we say that a line is ancestral to a site $i$ if it is ancestral to $A$ with $i \in A$. If we want to express that a line is ancestral to site $i$ and only to site $i$, we will say that
it is ancestral to $\{ i \}$.

 Recombination, mutation and genetic drift manifest themselves as follows (See Fig.~\ref{fig:arg} for an illustration).
\begin{enumerate}
\item
The effect of genetic drift is captured by the pairwise coalescence of lines, indicating that a pair of individuals has found a common ancestor. Any ordered pair of ancestral lines, say, ancestral to $A$ and $B$, coalesces, independently of all the other pairs, at rate $1$ into a single line ancestral to $A \cup B$. 
\item
Recombination results in the \emph{fragmentation} of lineages. Independently for any partition $\cA$ of $S$, any line is independently of all the others and at rate 
$\varrho_\cA^{} \geqslant 0$ hit by an $\cA$-recombination event 
\begin{tikzpicture}[baseline=15.75mm]
\draw (0.1,1.5) +(-0.2,0.4) rectangle +(0.2,0);
\node at (0.1,1.7) {$\scriptstyle{\cA}$};
\end{tikzpicture},
indicating that the individual at the affected (\emph{offspring}) line is the offspring of $|\cA|$ parents, corresponding to the blocks of $\cA$; the elements of each block are the sites that the offspring inherits from the corresponding parent. This leads to the fragmentation of the offspring line into multiple parental lines. If the offspring line is ancestral to $B \subseteq S$, the parental lines are ancestral to the blocks of the induced partition $\cA|_B^{}$ rather than $\cA$, because only the sites in $B$ are relevant for the sample. In particular, the transition is silent whenever $|\cA|_B| = 1$, that is, whenever $B \subseteq A$ for some $A \in \cA$.
%fragmented into multiple \emph{parental} lines which are ancestral to the blocks of $\cA|_B^{}$.
%This indicates that the individual at the affected line (which we also call the \emph{offspring line}) is the offspring of an arbitrary number of parents, %corresponding to the blocks of $\cA|_B^{}$, represented by the various parental lines. Here, the elements of each block are the sites that are contributed by %the corresponding parent. The rates $\varrho_\cA^{} \geqslant 0$ are referred to as \emph{recombination rates}.
\item
Mutations are represented by \emph{mutation marks}  
\begin{tikzpicture}[baseline=-1.4mm]
\draw (0,0) circle (6pt);
\node at (0,0) {$i$};
\node[anchor=west] at (0.1,-0.1) {$x_i^{} \to y_i^{}$};
\end{tikzpicture}
which indicate mutation at site $i$ from the allele $x_i^{}$ to the allele $y_i^{}$. They appear on each line independently with respective rates $u_i^{} M_i(x_i^{},y_i^{})$, independently for each $i \in S$ and $x_i^{}, y_i^{} \in X_i^{}$. 
\end{enumerate}
See Fig.~\ref{fig:arg} for an illustration. 
\begin{remark}
As there is (at the time of its construction) no fixed initial type configuration in the ARG, we have to account in 3 for mutations starting from \emph{all} possible alleles. Thus, most mutations will usually be silent, such as the mutation on the rightmost black line in Fig.~\ref{fig:arg}.
\end{remark}
\begin{remark} \label{rem:kingman}
An important special case arises when each leaf is ancestral to the same singleton, say $\{i\}$, recombination events 2 are void and the ARG reduces to a simple coalescent process in which each (ordered) pair of lineages coalesces at rate $1$ and which is decorated by mutation events. This process is known as \emph{Kingman's coalescent}. In Section~\ref{sec:coupling}, we will explain how to use independent Kingman coalescents to approximate the ARG in the setting of strong recombination. 
\end{remark}

\begin{figure}[t]
\centering
\begin{tikzpicture}
\draw[very thick,color=red] (0,0) -- (0,1.5);
\draw[very thick,color=blue] (0.1,0) -- (0.1,1.5);
\draw[very thick] (0.2,0) -- (0.2,1.5);
\draw (0.1,1.5) +(-0.2,0.4) rectangle +(0.2,0);
\node at (0.1,1.7) {$\scriptstyle{\cA}$};
\draw[very thick, color=red] (-0.1,1.7) -- (-0.9,1.7) -- (-0.9,6);
\node[anchor= north] at (0.1,0) {$\scriptstyle{(\texttt G, \texttt C, \texttt A)}$};
\draw[very thick, color=blue]  (0.3,1.75) -- (0.8,1.75) -- (0.8,3);
\draw[very thick, color=black] (0.3,1.65) -- (0.9,1.65) -- (0.9,3);
\draw (0.85,3) +(-0.2,0.4) rectangle +(0.2,0);
\node at (0.85,3.2) {$\scriptstyle{\cB}$};
\draw[very thick, color=blue] (0.65,3.2) -- (0.45,3.2) -- (0.45,5) -- (0.8,5) -- (0.8,6);
\draw[very thick, color=black] (1.05,3.2) -- (1.25,3.2) -- (1.25,5) -- (0.9,5) --(0.9,6); 
\draw[very thick,color=blue] (3,0) -- (3,2) -- (0.8,2);
\draw[very thick,color=black] (3.1,0) -- (3.1,2.1) -- (0.9,2.1);
\node[anchor=north] at (3,0) {$\scriptstyle{(\ast, \texttt T, \texttt A)}$};
\draw[very thick,color=black] (6,0) -- (6,5.5) -- (0.9,5.5);
\node[anchor=north] at (6,0) {$\scriptstyle{(\ast, \ast, \texttt A)}$};
\draw[-latex] (7,0) -- (7,6);
\draw[dashed] (6,5.5) -- (7,5.5);
\node[anchor=west] at (7,5.5) {$T_{\textnormal{MRCA}}^{}$};
%%%TYPES%%%
\node[anchor=south] at (-0.9,6) {$\scriptstyle{(\texttt G,\ast,\ast)}$};
\node[anchor=south] at (0.85,6) {$\scriptstyle{(\ast,\texttt C, \texttt A)}$};
\node[anchor=south] at (2.5,2.1) {$\scriptstyle{(\ast,\texttt C, \texttt A)}$};
\node[anchor=south] at (4.5,5.5) {$\scriptstyle{(\ast,\ast,\texttt A)}$};
\node[anchor=east] at (0.45,4.3) {$\scriptstyle{(\ast,\texttt C, \ast)}$};
\node[anchor=west] at (1.25,4.3) {$\scriptstyle{(\ast,\ast,\texttt A)}$};
\node[anchor=west] at (0.9,2.7) {$\scriptstyle{(\ast, \texttt C, \texttt A)}$};
\node[anchor=north west] at (0.8,1.7) {$\scriptstyle{(\ast,\texttt C, \texttt A)}$};
%%MUTATIONS%%%
\draw[fill=white] (3.05,1.3) circle (6pt);
\node at (3.05,1.3) {$\scriptstyle{2}$};
\node[anchor=west] at (3.16,1.3) {$\scriptstyle{\texttt C \to \texttt T}$};
\draw[fill=white] (6,4) circle (6pt);
\node at (6,4) {$\scriptstyle{3}$};
\node[anchor=east] at (5.9,4) {$\scriptstyle{\texttt T \to \texttt C}$};
%%%TIME AXIS LABEL%%%
\node[rotate=90, anchor=west] at (7.2,2) {time};
\end{tikzpicture}
\caption{\label{fig:arg}
A realisation of the ARG, started from $3$ leaves, observed at $\{1,2,3\}$, $\{2,3\}$ and $\{3\}$ (left to right). The first, second and third site are encoded red, blue and black and the sets of alleles are given by $X_1^{} = X_2^{} = X_3^{} = \{\texttt{A},\texttt{C},\texttt{G},\texttt{T} \}$. Unobserved sites are denoted by $\ast$. The fragmentation events are governed by the partitions $\cA = \{ \{1\}, \{2,3\} \}$ and $\cB = \{ \{1,3\}, \{2\} \}$.
Note that we have two common ancestors at time $T_{\textnormal{MRCA}}^{}$; a red one, ancestral to $\{1\}$, and a blue-black one, 
ancestral to $\{2,3\}$. The letters $\texttt G$, $\texttt C$ and $\texttt A$ in the ancestral sequences are the result of independent mutation beyond $T_{\textnormal{MRCA}}^{}$ on the red, blue and black lines, respectively.}

\end{figure}
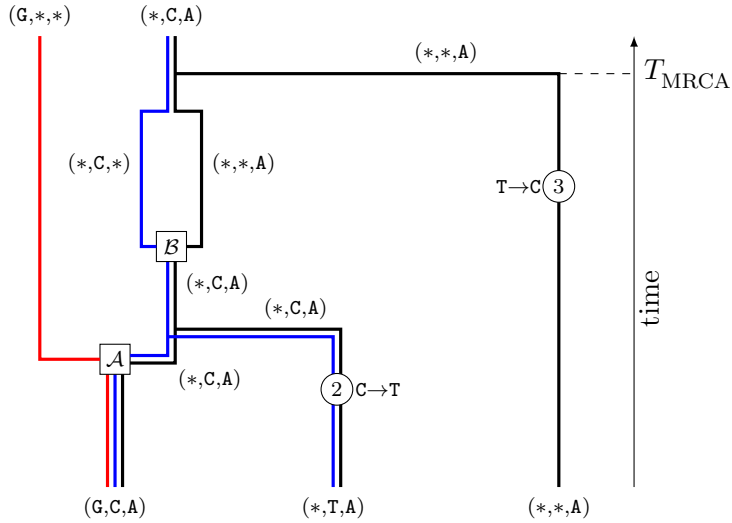

The ARG enables us to relate the type configuration of a sample of individuals from the present to that of their ancestors at any previous time. Given a realisation of the ARG with finite runtime, we call the top ends of the lines the \emph{ancestors} of the sample. More specifically, the top end of a line ancestral to $A$ corresponds to an ancestor observed at $A$. Given an assignment of types to the ancestors (where an ancestor observed at $A$ is assigned a type in $X_A^{}$), we propagate the types along the lines, from top to bottom, until we arrive at the leaves. We proceed according to the following rules.
\begin{enumerate}
\item
When encountering a recombination event, the types $x^{(1)}, \ldots, x^{(r)}$ at the parental lines are joined to form the type
$x^{(1)} \sqcup \ldots \sqcup x^{(r)}$ at the offspring line. Note that the fact that blocks of a partition are pairwise disjoint implies that
$D(x^{(1)}),\ldots,D(x^{(r)})$ are pairwise disjoint (As can be seen, for instance, in Fig.~\ref{fig:arg}). Hence, $x^{(1)},\ldots,x^{(r)}$ are pairwise compatible and the join
$x^{(1)} \sqcup \ldots \sqcup x^{(r)}$ is well defined.
\item
When encountering a coalescence event (say, between lines ancestral to $A$ and $B$), if $x$ is the type of the common ancestor, then $x|_C^{}$ is the type at the line that participated in the coalescence that is ancestral to $C \in \{A,B\}$.  
\item
When a mutation 
\begin{tikzpicture}[baseline=-1.4mm]
\draw (0,0) circle (6pt);
\node at (0,0) {$i$};
\node[anchor=west] at (0.1,-0.1) {$x_i^{} \to y_i^{}$};
\end{tikzpicture}
is encountered while the allele at site $i$ at that line is $x_i^{}$, it is changed to $y_i^{}$; otherwise, the mutation is silent.
\end{enumerate}
\begin{remark}
Note that rule 3 together with the rates at which mutation marks appear indeed makes it so that the alleles at different sites for each individual effectively evolve according to the independent mutation chains.
\end{remark}

\subsection{Approximate sampling formula} \label{subsec:sampling}

We want to use the ARG to understand the distribution of a sample drawn at \emph{stationarity}. Recall that in order to guarantee asymptotically stable allele frequencies, we assumed that the mutation kernels $M_i^{}$ are irreducible. We will now argue that this is indeed sufficient for the configuration of samples consisting of multiple individuals to have a unique stationary distribution as well.

Let $T_{\textnormal{MRCA}}^{}$ be the \emph{time of the most recent common ancestors}, that is, the first time at which, for each $i \in S$, there is at most one line ancestral to $i$. Notice the plural; classically, in single-site coalescent models, there is a single individual, having lived some time in the past, that is ancestral to the entire sample. Under recombination, however, different sites may have different common ancestors, as in Fig.~\ref{fig:arg}.
 It is clear that  
$T_{\textnormal{MRCA}}^{}$
is almost surely finite.
The types of the ancestors at time $T_{\textnormal{MRCA}}^{}$ are the result of independently evolving mutation chains, run for time 
$t - T_{\textnormal{MRCA}}^{}$, where $t$ is the total runtime of the ARG; this is because every mutation after $T_{\textnormal{MRCA}}^{}$ affects only a single individual alive at time
$T_{\textnormal{MRCA}}^{}$. In the limit $t \to \infty$, the type of an ancestor at time $T_{\textnormal{MRCA}}^{}$ observed at $A$ is distributed according to 
$\bigotimes_{i \in A} \pi_i^{}$, where $\pi_i^{}$ is the stationary distribution of the mutation chain at site $i$. 
To summarise, we can produce a random sample from the population at stationarity in three steps; see also Fig.~\ref{fig:arg}.
\begin{enumerate}
\item
Run the ARG until (backward) time $T_{\textnormal{MRCA}}$. 
\item
Assign to each ancestor (the top end of each ancestral line), observed at, say, $A$, an independent sample from
$
\bigotimes_{i \in A} \pi_i^{}.
$
\item
Propagate these types from top to bottom, following the rules above.
\end{enumerate}

Given $\nu \in \cN(\cX)$, we write $q(\nu)$ for the probability of obtaining an \emph{ordered} sample configuration with frequencies $\nu$, i.e. a sample in which each type $x \in \cX$ appears $\nu(x)$ times and in a particular, fixed (but arbitrary) order when the ARG in step 1 is started from $\nu(X_A^{})$ leaves observed at $A$ for all $A \subseteq S$; we will see a more formal definition in Section~\ref{sec:marg}. Due to exchangeability, that is, the invariance of the evolution of the ARG under permutation of lines, the probability of observing any sample configuration depends only on the type frequencies.
Moreover, it will turn out to be convenient to also define $q_\infty^{} (\nu) \defeq 0$ if $\nu$ is a signed point measure that contains negative masses.

Because we want to investigate a setting of strong recombination, we assume that the recombination rates $\varrho_\cA^{}$ are of the form $\varrho \cdot r_\cA^{}$ where $\varrho$ is a global scaling parameter that we will later send to $\infty$, and $r_\cA^{} \geqslant 0$ are constants. We will further assume without loss of generality that any two sites may be separated by recombination. This means that for any two $i,j \in S$, $i \neq j$, there is an $\cA \in \bP(S)$ with
$\varrho_\cA^{} > 0$ and $\cA|_{\{i,j\}}^{} = \{\{i\},\{j\} \}$. Otherwise we could lump $i$ and $j$ together and treat them as a single site. 
We use the normalisation $\sum_{\cA \in \bP(S)} r_\cA^{}  = 1$, call $\varrho$ the recombination \emph{rate} and refer to the $r_\cA^{}$ as recombination
\emph{probabilities}.

%explain the approximation of the ARG by a collection of independent Kingman coalescents HERE
The ARG is a fairly complicated object and, in particular, explicit formulas for the associated sampling probabilities $q(\nu)$ are not available. Here, we 
%want to use the ARG to (re)derive a formula for $q_1^{}$ in the approximation
consider an asymptotic expansion of the form
\begin{equation} \label{refinedapproximation}
q(\nu) = q_\infty^{} (\nu) + \varrho^{-1} q_1^{}  (\nu) + O(\varrho^{-2}) \quad \textnormal{ as } \varrho \to \infty.
\end{equation}
In order to understand the terms $q_\infty^{} (\nu)$ and $q_1^{} (\nu)$, we will approximate the ARG by a collection 
of independent Kingman coalescents, one for each site, by trying to match transitions of the two processes. In this context, $q_\infty^{} (\nu)$ is the sampling probability associated with such an independent collection of Kingman coalescents and the first-order correction $q_1^{} (\nu)$ accounts for the most likely of those transitions that can not be matched.

To be a bit more precise, let us 
consider for each $i \in S$ the \emph{marginal coalescent associated with site $i$} ($\textnormal{MC}_i$ for short) by tracing in the ARG, starting from each particle observed at site $i$, the unique lineages ancestral to site $i$. For instance, in Fig.~\ref{fig:arg} one would obtain the $\textnormal{MC}_1$ 
($\textnormal{MC}_2$,$\textnormal{MC}_3$) by considering only the red (blue, black) lines and erasing all the others.

Clearly, each $\textnormal{MC}_i$ evolves like a Kingman coalescent (see Remark~\ref{rem:kingman}) and in particular, the law of $\textnormal{MC}_i$ does not depend on $\varrho$. As is evident from Fig.~\ref{fig:arg}, the collection 
$\textnormal{MC}_1, \ldots, \textnormal{MC}_n$ is not independent; however, if $\varrho$ is very large, the ARG consists mostly of lines that are ancestral to singletons. It is not hard to see that this implies that coalescences occur almost independently in different MCs. 

Consequently, for large $\varrho$, the type configuration generated by the ARG can be thought of as pieced together from the samples generated by the approximately independent marginal coalescents $\textnormal{MC}_1,\ldots,\textnormal{MC}_n$. We therefore expect in~\eqref{refinedapproximation} that
\begin{equation} \label{qinfty}
q_\infty^{} (\nu) = \prod_{i=1}^n q (\nu^{\{i\}}),
\end{equation}
which in particular means that $q_\infty^{} (\nu) = q_\infty^{} (\sigma(\nu))$, where $\sigma$ is the \emph{fragmentation map} defined via
\begin{equation} \label{sigmadef}
\sigma(\nu) \defeq \nu^{\{1 \}} + \ldots + \nu^{ \{n \}}.
\end{equation}
By the linearity of marginalisation, it is clear that $\sigma$ is linear.
In order to derive an expression for $q_1^{}$ in Eq.~\eqref{refinedapproximation}, we will later (in Section~\ref{sec:coupling}) construct an explicit coupling of the ARG and a collection of a-priori independent Kingman coalescents, one for each site. It turns out that this can be done in such a way that the latter match up precisely with the marginal coalescents of the ARG with high probability. This will immediately prove that $q(\nu) = q_\infty^{} (\nu) + O(\varrho^{-1})$ with $q_\infty^{} (\nu)$ as in Eq.~\eqref{qinfty}. By exploring the most likely ways in which this matching can break down, we obtain the first-order correction in Eq.~\eqref{refinedapproximation}.

In Section~\ref{sec:marg}, we will formally define $q$ and $q_\infty^{}$ via a measure-valued representation of the ARG and a collection of independent Kingman coalescents. There, we will also prove Eq.~\eqref{qinfty}. For now, the reader may think of $q_\infty^{}$ as being defined via Eq.~\eqref{qinfty}.

In the setting of parent-independent mutation, that is, if the mutation probabilities $M_i(x_i^{}, y_i^{})$ can be written as $M_i (y_i^{})$ independently of the parental type $x_i^{}$, 
we have the explicit formula
\begin{equation*}
q (\nu^{\{i\}}) = \frac{1}{(u_i^{})_{\overline{n_i^{}}}}\prod_{y_i^{} \in X_i^{}} \big (u_i^{} M_i^{} (y_i^{})\big)_{\overline{\nu^{\{i\}}(y_i^{})}},
\end{equation*}
where $(z)_{\overline m}^{} \defeq z ( z + 1) \ldots (z + m -1)$ denotes the $m$-th ascending factorial of $z$ and \mbox{$n_i^{} \defeq \|\nu^{\{i\}} \|$} is the number of individuals that are observed at a (possibly proper) superset of $\{i\}$;
see~\cite[Remark 2.2]{JenkinsFearnheadSong2015}. For general finite-allele models, approximate single-site sampling formulae are available in~\cite{BhaskarKammSong2012}. 

The following theorem was proved in~\cite{BhaskarSong2012} for the special case of single-crossover recombination.
\begin{theorem}\label{thm:maintheorem}
The first-order term in the expansion~\eqref{refinedapproximation} is given by
\begin{equation} \label{samplingformula}
q_1^{} (\nu) = \sum_{x \in \cX} q_\infty^{} \big ( \sigma(\nu) - \sigma(\delta_x^{})   \big ) \sum_{ \substack {D(x) \subseteq A \subseteq S\\ |A| \geqslant 2}} 
\frac{(-1)^{|A \setminus D(x)|}}{\bar r_A^{}} \binom{\nu^{\supseteq A,D(x)}(x)}{2},
\end{equation}
where 
\begin{equation*}
\bar r_A^{} \defeq \sum_{\substack{\cB \in \bP(S) \\ \cB|_{A}^{} \neq \{A\}}} r_\cB^{}
\end{equation*}
is the total fragmentation rate of $A$ and $\sigma$ is the linear fragmentation map defined in Eq.~\eqref{sigmadef}. Recall that $q(\mu)$ and therefore
$q_\infty^{} (\mu)$ vanish if $\mu$ is a signed point measure with $\mu(x) < 0$ for some $x \in \cX$. 
\end{theorem}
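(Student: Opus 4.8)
The plan is to realise both $q(\nu)$ and $q_\infty^{}(\nu)$ as functionals of one coupled pair of ancestral processes, to show these functionals differ only on an event of probability $\bigo{\varrho^{-1}}$, and to identify the leading contribution of that event by an inclusion--exclusion over recombination patterns.

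\emph{Step 1: a measure-valued ARG.} First I would encode the recipe for a stationary sample -- run the ARG to $T_{\textnormal{MRCA}}^{}$, attach stationary types, propagate downwards -- as a Markov process $(\bM^\varrho_t)_{t\geq 0}$ with values in $\cN(\cX)$, in which a particle $\delta_x$ with $x \in X_A^{}$ represents an ancestral line, ancestral to $A$, that will deliver the partial type $x$ to its descendant leaf. Coalescence merges two \emph{compatible} particles $\delta_x,\delta_y$ into $\delta_{x \sqcup y}$ at rate $1$ per ordered pair, an $\cA$-recombination replaces $\delta_x$ by $\sum_{B \in \cA|_{d(x)}^{}} \delta_{x|_B^{}}$ at rate $\varrho\, r_\cA^{}$, and mutation acts coordinatewise through the chains $u_i^{}(M_i^{} - \id)$. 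Once every site carries at most one ancestral line, attaching the stationary allele $\pi_i^{}$ at each site-wise root and propagating downwards yields the (ordered) sampling probability $q(\nu)$. For $\varrho = \infty$ every multi-site particle is shattered instantaneously, $\bM^\infty$ decomposes into $n$ independent Kingman coalescents with $n_i^{} = \|\nu^{\{i\}}\|$ initial particles, and one recovers $q_\infty^{}$ exactly as in~\eqref{qinftyasproduct}.

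\emph{Step 2: coupling and the first-order event.} I would then run $\bM^\varrho$ and $\bM^\infty$ on a common probability space via a graphical construction sharing the coalescence and mutation clocks, with the (fast) recombination events of $\bM^\varrho$ playing the role of the instantaneous shattering in $\bM^\infty$. Under this coupling the two samples coincide unless, at some instant, a coalescence in $\bM^\varrho$ merges two lines that are both still ancestral to supersets of some $A$ with $|A| \geq 2$ \emph{before} $A$ has been broken in either of them -- call such an occurrence a \emph{nontrivial merger}. Two given lines ancestral to supersets of $A$ merge before $A$ is split in either line with probability $(1 + \varrho \bar r_A^{})^{-1} = \varrho^{-1} \bar r_A^{-1} + \bigo{\varrho^{-2}}$, which is the only place $\bar r_A^{}$ enters; consequently the probability of two or more nontrivial mergers is $\bigo{\varrho^{-2}}$, and to the relevant order at most one occurs. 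On the complement of ``exactly one nontrivial merger'' the two samples agree, so
\[
q(\nu) - q_\infty^{}(\nu) = \PP\bigl(\text{one nontrivial merger},\ \bM^\varrho\text{-sample} = \nu\bigr) - \PP\bigl(\text{one nontrivial merger},\ \bM^\infty\text{-sample} = \nu\bigr) + \bigo{\varrho^{-2}}.
\]
A nontrivial merger between the lines descending from two leaves observed at supersets of $A$, producing a common partial type $x$ on the sites $d(x) \subseteq A$ those two lines still share when they meet, removes -- once the merged line is subsequently shattered and the process decouples -- exactly one copy of the shattered marginals $\sigma(\delta_x)$ from the pool. Its contribution to the right-hand side is therefore a scaled probability times $q_\infty^{}\bigl(\sigma(\nu) - \sigma(\delta_x)\bigr)$, with $\binom{\nu^{\supseteq A, d(x)}(x)}{2}$ choices of the unordered pair of leaves involved.

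\emph{Step 3: inclusion--exclusion, and the main obstacle.} The set $A$ at which a nontrivial merger is ``recorded'' ranges over all supersets of the genuine agreement set $d(x)$ contained in the observation sets of both leaves, and $(1 + \varrho \bar r_A^{})^{-1}$ together with the count $\nu^{\supseteq A, d(x)}(x)$ behaves monotonically in $A$; deciding at which level to attribute the event therefore calls for a M\"obius inversion over the interval $[d(x), A]$ in the Boolean lattice of subsets of $S$, which is exactly what produces the alternating weight $(-1)^{|A \setminus d(x)|} \bar r_A^{-1}$. Summing over the entangling pair, the common type $x$, and the recording level $A$ with $|A| \geq 2$, then multiplying by $\varrho$ and letting $\varrho \to \infty$, turns~\eqref{refinedapproximation} into~\eqref{samplingformula}. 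I expect the real work to be in Step~2: setting up the graphical coupling precisely enough that the $\bigo{\varrho^{-2}}$ remainder is genuinely controlled -- in particular, that cascades of fast fragmentations and mergers among multi-site lines, and the interplay of a nontrivial merger with the single-site coalescents that follow it, only contribute at second order -- and pinning down the label $(x, d(x))$ of a nontrivial merger correctly, which is precisely the point where the two-site argument of~\cite{JenkinsFearnheadSong2015} does not carry over verbatim.
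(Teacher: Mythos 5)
Your proposal follows essentially the same route as the paper: a typed, measure-valued ARG and its infinite-recombination (split) version, a coupling that agrees except on an $\cO(\varrho^{-1})$ event driven by coalescences between particles still sharing at least two sites, the per-pair survival probability $\bigl(1+\varrho\bar r_A^{}\bigr)^{-1}$ as the source of $\bar r_A^{-1}$, and M\"obius inversion over the Boolean lattice to produce the alternating sign. The paper's execution of your Step 2 additionally splits the discrepancy into bad coalescences of type $1$ (present in $\cR$, unmatched in $\cR^\infty$) and type $2$ (present in $\cR^\infty$, unmatched in $\cR$), whose four inclusion--exclusion sums combine into the single outer sum over all $x\in\cX$ including the empty type --- which is exactly the bookkeeping you flag as the remaining work.
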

We stress that the outer sum in \eqref{samplingformula} runs over \emph{all} types, including the empty type $\epsilon$ in which case the inner sum runs over all subsets $A$ of $S$ with at least two elements.  Recall that $\nu^{\supseteq A, D(\epsilon)} (\epsilon)$ is just the total mass $\| \nu^{\supseteq A} \|$ of the measure $\nu^{\supseteq A}$, i.e. the total number of samples that are observed at some superset of $A$. 

Keep in mind that so far, we have defined $q_\infty^{} (\nu)$ only informally as the sampling probability associated with an independent collection of Kingman coalescents. We will give a more precise definition in Section~\ref{sec:marg}.

\begin{example} \label{ex:concreteexample}
As the expression in Eq.~\eqref{samplingformula} looks fairly complex, let us consider an example and compute $q_1^{} (\nu)$ for 
$\nu = 2 \delta_{\texttt{CA}}$.

First of all, we have $\sigma(\nu) = 2 \delta_{\ast A} + 2 \delta_{C \ast}$. Because $q$ and therefore $q_\infty^{}$
vanishes on nonpositive point measures, so that the outer sum ranges over all $x \in \widetilde \cX$ with $x|^{}_{\{1\}} = \texttt{C} \ast$ if $1 \in D(x)$ and $x|^{}_{\{2\}} = \ast \texttt{A}$ if $2 \in D(x)$. These are the types
$\texttt{CA}, \ast \texttt{A}, \texttt{C} \ast$ and $\epsilon = \ast \ast$. 
The inner sums are all $1$ for $x = \texttt{CA}$ and $x = \ast \ast$, and they are $-1$ for the remaining two types; this is because $S = \{1,2\}$ is the only subset of $S$ with at least two elements so that $\bar r_S^{} = 1$, and we have
\begin{equation*}
\begin{split}
& \binom{\nu^{\supseteq \{1,2\},D(\texttt{CA})}(\texttt{CA})}{2} \\
& \quad = \binom{\nu^{\supseteq \{1,2\},D(\ast \texttt{A})}(\ast \texttt{A})}{2} = 
\binom{\nu^{\supseteq \{1,2\},D(\texttt{C} \ast)}(\texttt{C} \ast)}{2} =
\binom{\nu^{\supseteq \{1,2\},D(\ast \ast)}(\ast \ast)}{2} \\
& \quad = \binom{2}{2} = 1; 
\end{split}
\end{equation*}
Thus, Eq.~\eqref{samplingformula} becomes
\begin{equation*}
q_1^{} (2 \delta_{\texttt{CA}}) = q_\infty^{} (2 \delta_{\texttt{C} \ast}  + 2 \delta_{\ast \texttt{A}}) - 
q_\infty^{} ( \delta_{\texttt{C} \ast} + 2 \delta_{\ast \texttt{A}} ) -
q_\infty^{} ( 2 \delta_{ \texttt{C} \ast} +  \delta_{\ast \texttt{A}} ) +
q_\infty^{} ( \delta_{\texttt{C} \ast} + \delta_{\ast \texttt{A}} ) .
\end{equation*}
\end{example}
We will give a graphical proof of this special case in Example~\ref{ex:continued} at the end of Subsection~\ref{subsec:couplinggraphical}, following a graphical version of the aforementioned coupling argument. This is independent of the measure-valued constructions to be introduced in the next section, and which are needed to prove Theorem~\ref{thm:maintheorem} in full generality. So, the reader who is looking to get a `feel' for these arguments before diving into the technical details is encouraged to skip ahead.

\begin{remark}
In this work, we consider a finite-alleles model of mutation. However, we stress that our arguments do not rely on the concrete form of the mutation model. In particular, the extension to the infinite alleles / infinite sites model would be straightforward.
\end{remark}

To prove Theorem~\ref{thm:maintheorem}, we start by introducing a slightly different version of the ARG which, albeit slightly more involved than the graphical construction at a first glance, is better suited to technical purposes. 

\section{A measure-valued coalescent with recombination}~\label{sec:marg}  
We have introduced the ARG as a random graphical construction and we have seen how it can be used to construct a random sample from a population that has evolved for a long time under recombination, mutation and genetic drift. In this section, our goal is to reformulate these ideas in a slightly different and more rigorous fashion.

In contrast to~\cite{JenkinsFearnheadSong2015}, who worked with untyped coalescents throughout, it will be more convenient for us to work with a \emph{typed} approach. This will make it much easier to later set up the coupling, as we will not have to explicitly keep track of the correspondence between the marginal coalescents in the ARG (see Section~\ref{sec:informalarg}) and the independent Kingman coalescents. Such bookkeeping was performed in~\cite{JenkinsFearnheadSong2015}, but seems difficult to generalise to more than two sites.

The main idea is to keep track in the ARG of the type configurations that lead to the desired sample. By exchangeability, it suffices to keep track of the frequencies in these configurations, which justifies our measure-valued approach below.

For instance, let us reconsider the example in Fig.~\ref{fig:arg}. The type frequencies at the leaves are given by
$
\nu = \delta_{(\texttt G, \texttt C, \texttt A)}^{} + \delta_{ (\ast, \texttt T, \texttt A) }^{} + \delta_{(\ast,\ast,\texttt A)}^{}
$,
where, as usual, $\ast$ is used to mark unobserved sites. We see that the first event that is encountered on the ancestral line of the leaf with type $(\ast, \texttt T, \texttt A)$ is the mutation
\begin{tikzpicture}[baseline=-1.4mm]
\draw (0,0) circle (6pt);
\node at (0,0) {$2$};
\node[anchor=west] at (0.1,0) {$\texttt C \to \texttt T$};
\end{tikzpicture}.
Therefore, in order to observe the desired allele \texttt T in this individual, there are \emph{two} possibilities for the allele just prior to (above) the mutation; it needs to be either \texttt T itself, or \texttt C because the \texttt C would have mutated to the desired \texttt T. 
If we stop the ARG right after that first mutation we would need to observe an ordered sample with frequencies
$\nu'_1 = \delta_{(\texttt G, \texttt C, \texttt A)}^{} + \delta_{(\ast,\texttt T, \texttt A)}^{} + \delta_{(\ast,\ast,\texttt A)}^{}$ or
$\nu'_2 = \delta_{(\texttt G, \texttt C, \texttt A)}^{} + \delta_{(\ast,\texttt C, \texttt A)}^{} + \delta_{(\ast,\ast,\texttt A)}^{}$.

In order to efficiently keep track of these multiple possibilities we need to allow for some ambiguity in the specification of alleles by working with \emph{fuzzy} types. For $i \in S$, we denote by $\widetilde X_i$ the set of nonempty subsets of $X_i$ and, in analogy to our earlier definitions, we define 
for $A \subseteq S$
\begin{equation*}
\widetilde X_A^{} \defeq \prod_{i \in A} \widetilde X_i^{}
\end{equation*}
and 
\begin{equation*}
\widetilde \cX \defeq \dot {\bigcup_{A \subseteq S} } \widetilde X_A^{}.
\end{equation*}
As before, $\widetilde X_\varnothing^{} = \{\epsilon \}$ where $\epsilon$ is the empty sequence. We call $\widetilde \cX$ the set of \emph{fuzzy types}. For a fuzzy $x \in \widetilde \cX$, the marginalisations $x|_B^{}$ and
the set $D(x)$ of observed sites and the alleles $x(i)$ at site $i$ are defined in analogy with the definitions in Section~\ref{sec:informalarg}. Moreover, we embed the set $\cX$ of (exact) types in $\widetilde \cX$ by identifying any $x \in \cX$ with $(\{x(i) \})_{i \in D(x)}^{} \in \widetilde \cX$. This also gives an embedding of $\cN(\cX)$ into $\cN(\widetilde \cX)$ and is our justification for using the same notations for fuzzy and exact types. 

The notions of compatibility and join need to be adapted slightly. We call $x, y \in \widetilde \cX$ \emph{compatible} if 
$x(i) \cap y(i) \neq \varnothing$ for all $i \in D(x) \cap D(y)$,  and \emph{incompatible}, otherwise. For compatible $x$ and $y$, their join $x \sqcup y$ is defined via
\begin{equation}\label{fuzzyjoin}
(x \sqcup y) (i) \defeq 
\begin{cases}
x(i) & \textnormal{for } i \in D(x) \setminus D(y), \\
y(i) & \textnormal{for } i \in D(y) \setminus D(x), \\
x(i) \cap y(i)   &\textnormal{for } i \in D(x) \cap D(y).
\end{cases}
\end{equation}
Of course, this is not well defined if $x$ and $y$ are incompatible
Note that this is consistent with our ealier definition for exact types. From now on, we will refer to fuzzy types simply as types unless mentioned otherwise.

Let us talk more systematically about the effects of the transitions in the (graphical) ARG on (the type frequencies of) configurations that lead to the desired sample. We start with mutation. Assume that we encounter a mutation
\begin{tikzpicture}[baseline=-1.4mm]
\draw (0,0) circle (6pt);
\node at (0,0) {$i$};
\node[anchor=west] at (0.1,-0.05) {$y_i^{} \to z_i^{}$};
\end{tikzpicture}
and that, just below the mutation, we want to see $x \in \widetilde \cX$. The reader should keep in mind that $x$ is a fuzzy type, whence $x(i)$ is a subset of $X_i$, while $y_i^{}, z_i^{}$ are single alleles, i.e. elements of $X_i$. In particular, statements of the form `$z_i^{} \in x(i)$'
are well-defined.

Let $m_i^{} (x;y_i^{},z_i^{})$ be the (fuzzy) type that needs to be observed just before the mutation to make this happen. Clearly, as mutation only affects site $i$, we must have $m_i^{} (x;y_i^{},z_i^{})(j) = x(j)$ for all 
$j \in D(x) \setminus \{i\}$. If $z_i^{} \in x(i)$, then observing $y_i^{}$ right before the mutation would work as well as observing any allele in $x_i^{}$. On the other hand, if $z_i^{} \not \in x(i)$, then $y_i^{}$ would not work, even if $y_i^{} \in x(i)$.
Thus,
\begin{equation}\label{mutantcandidates}
m_i^{} (x;y_i^{},z_i^{})(j) =
\begin{cases}
x(j) & \textnormal{if } j \neq i, \\
x(i) \cup \{y_i^{} \} & \textnormal{if } j = i \textnormal{ and }  z_i^{} \in x(i), \\
x(i) \setminus \{y_i^{} \} & \textnormal{if } j = i \textnormal{ and }   z_i^{} \not \in x(i).
\end{cases}
\end{equation}
Note that it might happen that $m_i^{} (x;y_i^{},z_i^{})(i) = \varnothing$. This would mean that it is impossible to observe our desired sample, and it makes no sense to trace its genealogy further. For this purpose, we introduce a cemetary state $\Delta$. We extend $\sigma$ from $\cN(\cX)$ to $\cN(\cX) \cup \{ \Delta \}$ via $\sigma(\Delta) = \Delta$.

The effect of coalescence is straightforward. If two lines with types $x$ and $y$ coalesce, we need to observe at the common ancestral line and at the sites in $D(x) \cap D(y)$ alleles that show up in both $x$ and $y$. If $x$ and $y$ are incompatible, this is not possible and we end up in $\Delta$. Otherwise, the type of the ancestral line will have to be $x \sqcup y$ as in Eq.~\eqref{fuzzyjoin}.

The effect of recombination is also easy to understand. In order to observe a certain  offspring type $x$, the fragments we need to observe at the parental lines are simply the marginals of $x$ with respect to the blocks of $\cA|_{D(x)}^{}$, with $\cA$ being the partition defining the fragmentation event. More concisely, 
the type configuration of the parental lines must have frequencies
\begin{equation*}
\sum_{\substack{A \in \cA \\ A \cap D(x) \neq \varnothing}} \delta_{x|_A^{}}^{}.  
\end{equation*}
To summarise:
\begin{definition}\label{def:measureARG}
The \emph{measure-valued} \textnormal{ARG (mARG)} 
%$\cR = (\cR^{}_t)_{t \geqslant 0}^{}$
is a continuous-time Markov chain on
\mbox{$\cN(\widetilde \cX) \cup \{ \Delta \}$} with the following transitions, starting from \mbox{$\nu \in \cN(\widetilde \cX)$}.
\begin{enumerate}
\item \textbf{Coalescence}: Independently for any $x, y \in \widetilde \cX$,
\begin{equation*}
\nu \to 
C_{x,y} (\nu) \defeq
\begin{cases}
\nu - \delta_{x}^{} - \delta_{y}^{} + \delta_{x \sqcup y}^{} & \textnormal{if } x \textnormal{ and } y \textnormal{ are compatible,}\\
\Delta & \textnormal{if } x \textnormal{ and } y \textnormal{ are incompatible.} 
\end{cases},
\end{equation*}
at rate $\nu(x) \nu(y)$ if $x \neq y$ and $\nu(x) (\nu(x) - 1)$ if $x = y$. 
\item \textbf{Recombination:} Independently for any $x \in \widetilde \cX$ and any partition $\cA$ of $S$,
\begin{equation*}
\nu \to \nu - \delta_x^{} + \sum_{\substack{A \in \cA \\ A \cap D(x) \neq \varnothing}} \delta_{x|_A^{}}^{}
\end{equation*}
at rate $\varrho_\cA^{} \nu(x)$.
\item \textbf{Mutation: } Independently for each $x \in \widetilde \cX$, each $i \in D(x)$ and each $y_i^{}, z_i^{} \in X_i$,  
\begin{equation*}
\nu \to 
\begin{cases}
\nu - \delta_x^{} + \delta_{m_i^{} (x;y_i^{},z_i^{})}^{} &  \textnormal{if } m_i^{} (x;y_i^{},z_i^{}) (i) \neq \varnothing, \\
\Delta & \textnormal{if } m_i^{} (x;y_i^{},z_i^{})(i) = \varnothing.
\end{cases}
\end{equation*}
at rate $\nu(x) u_i^{} M_i(y_i^{},z_i^{})$ and with $m_i^{} (x;y_i^{},z_i^{})$ as given in Eq.~\eqref{mutantcandidates}.
\end{enumerate}
\end{definition}
Generically, we will denote the mARG as $\cR = (\cR_t^{})_{t \geqslant 0}^{}$. 

\begin{remark}
The rate in 1 of Definition~\ref{def:measureARG} can in both cases be written more concisely as $\nu(x) ( \nu - \delta_x^{} )(y)$. This is the formulation we will use below.
\end{remark}

In order to connect this to the graphical construction in Section~\ref{sec:informalarg}, it is useful to think of the mARG (and of the split mARG, which will be introduced below) as a collection of evolving particles, corresponding to the unit Dirac measures; see Remark~\ref{rmk:asparticlesystems}. We say that a particle $\delta_x^{}$ is observed at $D(x)$.

Recall that in Section~\ref{sec:informalarg}, we constructed a sample at stationarity by running the ARG until the time of the most recent common ancestors and then assigning types to them, sampled independently according to the stationary distributions of the mutation chains.
Here, this means that we let the mARG run until it is either in the cemetary state $\Delta$ (meaning that we can be sure that the genealogy is incompatible with the desired sample) or \emph{simple} in the following sense.
\begin{definition} \label{def:simpleness}
We call $\nu \in \cN (\widetilde \cX)$ \emph{simple} if 
\begin{enumerate}
\item
$A = B$ or $A \cap B = \varnothing$ for all $A,B \subseteq S$ with $\nu(\widetilde X_A) >0$ and $\nu(\widetilde X_B) > 0$, and
\item
$\nu(\widetilde X_A) \leqslant 1$ for all $A \subseteq S$. 
\end{enumerate}
\end{definition}
Item 1 means that the sets of observed sites of different indviduals are mutually disjoint, while 2 guarantees that each set is observed at most once. Taking this together, this corresponds to the situation at time $T_{\textnormal{MRCA}}$ in Fig.~\ref{fig:arg}, where the sets of sites for which there are ancestral lines are disjoint, and for each set $A$ of sites, there is at most $1$ line ancestral to $A$. 

In particular, if $\nu = \Delta$ we already know that $q(\nu) = 0$.
For simple $\nu$, $T_{\textnormal{MRCA}} = 0$ and $q(\nu)$ is just the probability of drawing the desired types from the product measures of $\pi_i^{}$ associated with the ancestors, which are in this case just the sampled individuals themselves.

If $\nu$ is not simple and
$\nu \neq \Delta$, we run the mARG $\cR^\nu$ (the superscript indicating the initial condition) until the stopping time 
\begin{equation} \label{stoppingtimeT}
T \defeq \min \{ t \geqslant 0 : \cR_t^{} \textnormal{ is simple or } \cR_t^{} = \Delta \}.
\end{equation}
Then, $\cR^\nu_T$ is either simple or $\Delta$, and the sampling probability of $\nu$ \emph{conditional on this realisation of the genealogy} is given by $q(\cR_T^\nu)$. To summarise,

\begin{definition}\label{def:formalq}
For $\nu$ simple or $\nu = \Delta$, we let
\begin{equation} \label{simplesample}
q(\nu) \defeq 
\begin{cases}
\prod_{x \in \supp (\nu)} \prod_{i \in D(x)} \pi_i^{} (x|_i^{}(i)) & \textnormal{if } \nu \textnormal{ is simple}, \\
0 & \textnormal{if } \nu = \Delta.
\end{cases}
\end{equation}
Otherwise, we set 
\begin{equation}  \label{nonsimplesample}
q(\nu) \defeq \EE \big [ q(\cR_T^{}) | \cR_0^{} = \nu  \big ].
\end{equation}
For consistency, we also let $q(0) \defeq 1$.
\end{definition}
Let us stress once more that this definition reflects precisely the construction in Section~\ref{sec:informalarg}, where we construct a realisation of the ARG until time $T_{\textnormal{MRCA}}$ which roughly corresponds to our $T$; the difference is that here in the measure-valued setting, we may stop early once we discover that the sample is unobtainable, i.e. when the mARG hits $\Delta$. The same comment applies to Def.~\ref{def:formalqinfty} below.

\begin{remark}\label{rmk:duality}
The definition of $q$ in Eq.~\eqref{nonsimplesample} very loosely resembles a duality relation for Markov processes; see~\cite{kurtjansen} for a survey. It seems like an interesting  problem to try and find a dual diffusion process, describing the evolution forward in time. For two sites, such a diffusion process was found in~\cite{JenkinsFearnheadSong2015}, although no formal duality relation was established. It was based on expressing type frequencies in terms of marginal frequencies and linkage disequilibrium (i.e., correlation between the two sites), which linearised the problem. For the case of more than two sites considered here, a possible starting point might be the linearisation discussed in~\cite{haldane} which works for an arbitrary number of sites and general patterns of recombination.
\end{remark}

As we hinted at in Section~\ref{sec:informalarg}, the central device for our proof of Theorem~\ref{thm:maintheorem} will be a coupling between the ARG and a collection of independent Kingman coalescents to approximate the ancestries of the individual sites. So far, we have introduced the mARG as a measure-valued representation of the ARG and used it to give a precise definition of the sampling probability $q$. Next, we define a similar representation for the collection of independent Kingman coalescents. 

The most obvious way to do this would be to use a collection $(\cR^{(i)})_{i \in S}$ of independent mARGs with respective initial configurations
$(\nu^{ \{ i \}})_{i \in S}$. However, it is more convenient to summarise them into a single process, which is essentially a ``version'' of the mARG that is supported on 
\begin{equation} \label{x1definition}
\widetilde \cX_1 \defeq  \{ x \in \widetilde \cX : |D(x)| = 1  \}
\end{equation}
and in which $x$ and $y$ with $D(x) \neq D(y)$ cannot merge, i.e. transitions of the form $\nu \to C_{x,y} (\nu)$ are prohibited.
Incidentally, this exactly captures the behaviour of the (m)ARG as $\varrho \to \infty$. While lines ancestral to different sites are allowed to merge, they are split immediately by recombination and so these mergers become ``invisible'' in the large-recombination limit.

%In line with our goal of deriving an approximation for $q$ for large $\varrho$, we now define a version of $\cR$ ``with infinite recombination rate'', denoted by $%\cR^{\infty} = (\cR_t^{\infty})_{t \geqslant 0}^{}$. As we discussed already in Section~\ref{sec:informalarg} in the untyped setting, the effect
%of `infinite' recombination is that any particle $\delta_x^{}$ with $D(x) \geqslant 2$ is split immediately into
%the fragments $\sum_{i \in D(x)} \delta_{x|^{}_{i}}^{}$. This also means that we will in neglect in $\cR^\infty$ coalescence between particles observed at %different sites because they would be split again immediately. \todo{this paragraph needs to be rewritten \ldots the reviewer did not like fragments \smiley.}

\begin{definition}\label{def:sm-arg}
The \emph{split} \textnormal{mARG} (\textnormal{s-mARG}) is a Markov chain in continuous time on \mbox{$\cN(\widetilde \cX_1^{}) \cup \{ \Delta \}$} with
$\widetilde \cX_1^{}$ given in Eq.~\eqref{x1definition},
performing the following transitions starting from $\nu$.
\begin{enumerate}
\item \textbf{Coalescence:} Independently for any $i \in S$ and $x^i, y^i \in \widetilde X^{}_{\{i\}}$,
\begin{equation*}
\nu \to C_{x^i, y^i} (\nu)
\end{equation*}
at rate $\nu(x^i) (\nu - \delta_{x^i})(y^i)$, with $C_{x^i,y^i}$ as in Definition~\ref{def:measureARG}.
\item \textbf{Mutation:} Independently for each $i \in S$ and $x^i \in \widetilde X_{\{i\}}$, and $y_i^{}, z_i^{} \in X_i$, 
\begin{equation*}
\nu \to 
\begin{cases}
\nu - \delta_{x^i}^{} + \delta_{m_i^{}(x^i;y_i^{},z_i^{})}^{} & \textnormal{if } m_i^{} (x^i;y_i^{},z_i^{})(i) \neq \varnothing, \\
\Delta & \textnormal{if } m_i^{} (x^i;y_i^{},z_i^{})(i) = \varnothing,
\end{cases}
\end{equation*}
at rate $u_i^{} M(y_i^{},z_i^{}) \nu(x^i)$ with $m_i^{}(x^i;y_i^{},z_i^{})$ as in Eq.~\eqref{mutantcandidates}.
\end{enumerate}
\end{definition}
As mentioned above, the s-mARG is the natural limit of the mARG as $\varrho \to \infty$ so that we will generically write 
$\cR^\infty = (\cR^\infty_t)_{t \geqslant 0}$
for a realisation of the s-mARG. Moreover, it is clear that an s-mARG $\cR^\infty$ started from $\nu \in \cN(\widetilde \cX)$ can be written as $\cR^1_t + \ldots + \cR^n_t$, where 
$\cR^1, \ldots, \cR^n$ are independent mARGs, started from $\nu^{\{1\}}, \ldots, \nu^{\{n\}}$.

We now use $\cR^\infty$ to give a precise definition of $q_\infty^{}$. This is entirely analogous to Definition~\ref{def:formalq}, except that we use $\cR^\infty$ rather than $\cR$ in order to play nonsimple sample configurations back to either simple ones or $\Delta$. In particular, 
$q_\infty^{} (\Delta) = q (\Delta) = 0$ and $q_\infty^{} (\nu) = q (\nu)$ for simple $\nu$. In analogy with Eq.~\eqref{stoppingtimeT}, let
\begin{equation*}
T^\infty \defeq \min \{ t \geqslant 0 : \cR_t^{\infty} \textnormal{ is simple or } \cR_t^{\infty} = \Delta \}.
\end{equation*}

\begin{definition}\label{def:formalqinfty}
For $\nu$ simple or $\nu = \Delta$, we let
\begin{equation} \label{simplesampleq}
q_\infty^{} (\nu) \defeq 
\begin{cases}
\prod_{x \in \supp (\nu)} \prod_{i \in D(x)} \pi_i^{} (x|_{ i }^{}(i)) = q(\nu) & \textnormal{if } \nu \textnormal{ is simple}, \\
0 & \textnormal{if } \nu = \Delta.
\end{cases}
\end{equation}
Otherwise, we set
\begin{equation} \label{nonsimplesampleq}
q_\infty^{} (\nu) \defeq \EE \big [ q_\infty^{}(\cR_{T^\infty}^\infty) | \cR_0^\infty = \sigma(\nu)  \big ].
\end{equation}
\end{definition}
Note that in contrast to Definition~\ref{def:formalq}, where we started $\cR$ from $\nu$, we let $\cR^\infty$ start from 
$\sigma (\nu) =  \nu^{\{1\}} + \ldots + \nu^{\{n\}}$.
From this definition, it is not hard to show that $q_\infty^{}$ factorises as mentioned earlier (see Eq.~\eqref{qinfty})
\begin{lemma} \label{lem:qvsqinfty}
For all $\nu \in \cN(\widetilde \cX)$,
\begin{equation*}
q_\infty^{} (\nu) = \prod_{i \in S} q(\nu^{\{i\}}).
\end{equation*}

\end{lemma}
\begin{proof}
Recalling the discussion below Definition~\ref{def:measureARG}, for each $t \geqslant 0$, $\cR^\infty$ started from $\sigma(\nu)$ with $\nu \in \widetilde \cX_1$ can be written as $\cR^1_t + \ldots + \cR^n_t$ where $\cR^i$ is an mARG started from $\nu^{\{i\}}$ and all the 
$\cR^i$ are independent. Assuming for now that $ \cR_{T^\infty}^\infty \neq \Delta$, we have in particular that
\begin{equation*}
\supp \big ( \cR_{T^\infty}^\infty \big ) = \supp \big (  \cR_{T^\infty}^1   \big )  \dot \cup  
\ldots \dot \cup  \supp \big ( \cR_{T^\infty}^n \big ).
\end{equation*}
Moreover, by the definition of simplicity, $\supp \big ( \cR^i_{T^\infty}  \big )$ is either empty or a singleton. If it is not empty, we denote its unique element by $x_i^{} \in \widetilde X_i$. With this, applying Eqs.~\eqref{simplesampleq} and~\eqref{simplesample} yields
\begin{equation*}
q_\infty^{} ( \cR_{T^\infty}^\infty) = \prod_{i} \pi_i^{} (x_i^{}) = \prod_i q^{} (\cR_{T^\infty}^i)
\end{equation*}
where the product runs over all $i$ for which $\supp \big ( \cR^i_{T^\infty}  \big )$ is not empty. Note that this also holds if 
$\cR_{T^\infty}^\infty = \Delta$, because then we also have $\cR_{T^\infty}^i = \Delta$ for some $i$ and both sides are $0$.
Therefore, by taking the expectation,
\begin{equation*}
q_\infty^{} (\nu) = \EE \big [  q_\infty^{} ( \cR_{T^\infty}^\infty)         \big ] = \EE \big [   \prod_i q^{} (\cR_{T^\infty}^i)         \big ]
=  \prod_i \EE \big [   q^{} (\cR_{T^\infty}^i)         \big ]
     \prod_i \EE \big [   q^{} (\cR_{T^i}^i)         \big ].
\end{equation*}
Here, $T^i$ is the instance of the stopping time $T$ defined in Eq.~\eqref{stoppingtimeT} associated with $\cR^i$. In the last step, we are able to replace $T^\infty$ by $T^i$ because $T^i \leqslant T^\infty$ and between $T^i$ and $T^\infty$, $\cR^i$ is simple and only affected by mutation; recall that $\pi_i^{}$ in Eq.~\eqref{simplesample} is precisely the stationary distribution of the mutation chain at site $i$.

\end{proof}

\section{The central coupling}\label{sec:coupling}
As already hinted at in Section~\ref{sec:informalarg}, we want to approximate the ARG by a collection of independent Kingman coalescents, one for each site.
We will call them \emph{almost ancestral graphs} (aAGs) and write $\textnormal{aAG}_i$ for the \emph{\textnormal{aAG}  associated with site $i$.}
We will couple the ARG and the aAGs in such a way that the aAGs agree with the marginal genealogies ($(\textnormal{MC}_i)_{i \in S})$) in the ARG with high probability. Eventually, investigating the unlikely event that they don't will give us the first-order correction in Eq.~\eqref{samplingformula}.

We first give a graphical, slightly informal version of our construction to provide some intuition, similar in spirit to the informal description of the ARG in 
Section~\ref{sec:informalarg}. In Subsection~\ref{subsec:couplingrigorous}, we will pave the way for the proof of Theorem~\ref{thm:maintheorem} by translating our ideas into a coupling between the mARG and the s-mARG from Definitions~\ref{def:measureARG} and \ref{def:sm-arg}.  

\subsection{Graphical heuristics} \label{subsec:couplinggraphical}
The following construction is illustrated in Fig.~\ref{fig:graphicalcoupling}.
Initially, 
for each site $i \in S$, we start the $\textnormal{aAG}_i$ with one line for each line in the starting configuration in the ARG that is ancestral to $i$. 
Consequently, we initially have for each $i \in S$ a natural bijective correspondence
\begin{equation} \label{linecorrespondence} 
\Phi_i  : \{ \textnormal{Lineages in the ARG ancestral to }  i \} \to \{ \textnormal{Lineages in the } \textnormal{aAG}_i    \};
\end{equation}
we also say that the ARG and the aAGs are \emph{matched}.

We want to define the joint dynamics of the ARG and the aAGs so that this correspondence is preserved, that the ARG and aAGs \emph{remain} matched for as long as possible. To this end, it is useful to think of transitions in the aAGs occurring in response to transitions in the ARG. Recombination in the ARG does not produce a transition in the aAGs because recombination does not affect the law of marginal lineages (although it affects their dependence structure!). Also, mutation events are easy to match because they occur independently on each site.

The matching of coalescence events is slightly more subtle. 
We write $k,\ell$ for generic lineages in the ARG and $A(k), A(\ell)$ for the set of sites they are ancestral to. 
If $A(k) \cap A(\ell) = \{i \}$ for some $i \in S$, we can match a coalescence of the pair $(k,\ell)$ by letting the associated pair $(\Phi_i(k), \Phi_i(\ell))$ in the 
$\textnormal{aAG}_i$ coalesce at the same time (while updating the correspondence~\eqref{linecorrespondence} accordingly).
If $A(k) \cap A(\ell) = \varnothing$, the aAGs don't perform any transition.

On the other hand, if $A(k) \cap A(\ell) \supseteq \{i,j\}$ for some $i,j \in S$, $i \neq j$, we would need to let the pairs $(\Phi_i (k), \Phi_i (\ell))$ and 
$(\Phi_j (k), \Phi_j (\ell))$ coalesce at the same time in order to preserve the correspondence of lineages between the ARG and the aAGs. But then, the evolution of the aAGs would no longer be independent. Thus, we call such a pair $(k,\ell)$ a \emph{bad pair (in the ARG)}
and its coalescence a \emph{bad coalescence (in the ARG)}.
We refrain from trying to match bad coalescences in the ARG to transitions in the aAGs. Instead, at each time at which a bad coalescence occurs in the ARG, no transition is performed by the aAGs. Acknowledging that the aAGs then no longer describe the sample genealogy, we simply let them from that time onward evolve as 
independent Kingman coalescents and  independently of the ARG. We say that the ARG and the aAGs are \emph{unmatched}.

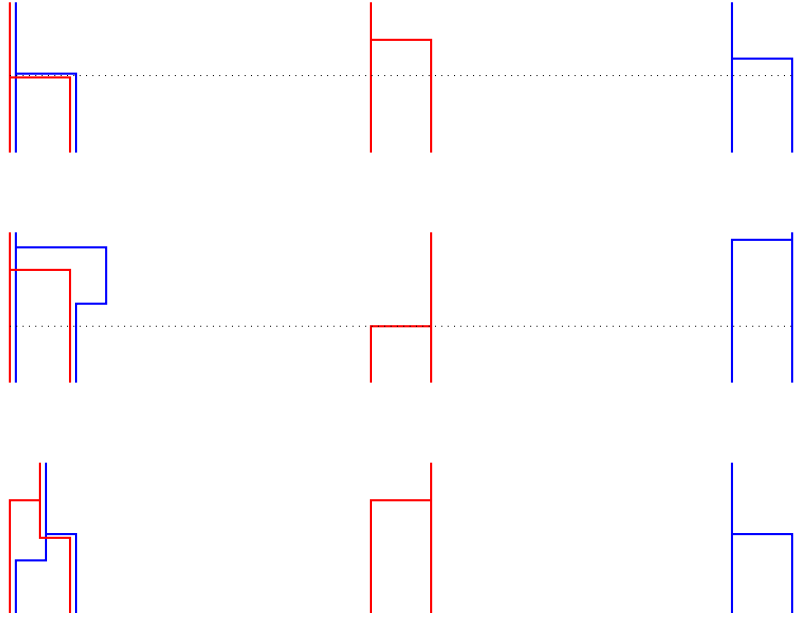
\begin{figure}[t]
\centering
\begin{tikzpicture}[xscale=0.8, yscale=0.5]
%ARG
\draw[red, thick] (0,0) -- (0,4);
\draw[blue, thick] (0.1,0) -- (0.1,4);
\draw[red, thick] (1,0) -- (1,2) -- (0,2);
\draw[blue, thick] (1.1,0) -- (1.1,2.1) -- (0.1,2.1);
%aAG1
\draw[red, thick] (6,0) -- (6,4);
\draw[red, thick] (7,0) -- (7,3) -- (6,3);
%aAG2
\draw[blue, thick] (12,0) -- (12,4);
\draw[blue, thick] (13,0) -- (13,2.5) -- (12, 2.5);
%unmatching
\draw[dotted] (0,2.05) -- (13,2.05);
\end{tikzpicture}
\\[1cm]
\begin{tikzpicture}[xscale=0.8, yscale=0.5]
%ARG
\draw[red, thick] (0,0) -- (0,4);
\draw[blue, thick] (0.1,0) -- (0.1,4);
\draw[red, thick] (1,0) -- (1,3) -- (0,3);
\draw[blue, thick] (1.1,0) -- (1.1,2.1) -- (1.6,2.1) -- (1.6,3.6) -- (0.1,3.6);
%aAG1
\draw[red, thick] (6,0) -- (6,1.5) -- (7,1.5);
\draw[red, thick] (7,0) -- (7,4);
%aAG2
\draw[blue, thick] (12,0) -- (12,3.8) -- (13,3.8);
\draw[blue, thick] (13,0) -- (13,4);
%unmatching
\draw[dotted] (0,1.5) -- (13,1.5);
\end{tikzpicture}
\\[1cm]
\begin{tikzpicture}[xscale=0.8, yscale=0.5]
%ARG
\draw[red, thick] (0,0) -- (0,3) -- (0.5,3);
\draw[blue, thick] (0.1,0) -- (0.1,1.4) -- (0.6,1.4) -- (0.6,4);
\draw[red, thick] (1,0) -- (1,2) -- (0.5,2) -- (0.5,4);
\draw[blue, thick] (1.1,0) -- (1.1,2.1) -- (0.6,2.1);
%aAG1
\draw[red, thick] (6,0) -- (6,3) -- (7,3);
\draw[red, thick] (7,0) -- (7,4);
%aAG2
\draw[blue, thick] (12,0) -- (12,4);
\draw[blue, thick] (13,0) -- (13,2.1) -- (12,2.1);
%no unmatching
\end{tikzpicture}
\caption{\label{fig:graphicalcoupling}
An illustration of the coupled construction of the ARG (left) and the aAGs for two sites. The $\textnormal{aAG}_1^{}$ is drawn in red, the $\textnormal{aAG}_2^{}$ in blue.  Top: a bad coalescence occurs in the ARG. Middle: a bad coalescence occurs in the $
\textnormal{aAG}_1^{}$. In both cases, the dotted line indicates the time when the ARG and the aAGs unmatch. Bottom: no bad coalescence occurs, and the lineages remain matched for all time.
}
\end{figure}

Moreover, while the ARG and the aAGs are still matched, for any bad pair $(k,\ell)$ in the ARG and any $i \in  A(k) \cap A(\ell)$, we also call the corresponding pair $(\Phi_i (k), \Phi_i (\ell))$ in the $\textnormal{aAG}_i$ a bad pair (in the $\textnormal{aAG}_i^{}$).
Note that these are precisely the pairs of lines in the $\textnormal{aAG}_i^{}$ that do not coalesce in response to coalescence in the ARG. 
 In order to obtain a collection of independent Kingman coalescents, we postulate that each bad pair in the $\textnormal{aAG}_i^{}$ coalesces independently at rate $1$ and independently of the ARG in what we call a \emph{bad coalescence event (in the  $\textnormal{aAG}_i^{}$)}. Also in this case, we unmatch the ARG and the aAGs, and let them continue to evolve independently without the ARG performing a transition of his own. All other pairs of lineages (either in the ARG or in the aAGs) are called \emph{good} pairs and we refer to coalescence between them as \emph{good coalescence}. 

Let us stress once more that good coalescence events, although they are matched between the ARG and the aAGs, occur in at most one aAG, namely the one corresponding to the single site that the coalescing lineages in the ARG have in common. Hence, $\textnormal{aAG}_1, \ldots, \textnormal{aAG}_n$ evolve independently of each other, but not independently of the ARG.
We close this section by summarising the joint evolution of the ARG and the aAGs.
\begin{enumerate}
\item
The family $(\textnormal{aAG}_i)_{i \in S}$ is independent. 
\item
For each $i \in S$, a mutation mark
\begin{tikzpicture}[baseline=-1.4mm]
\draw (0,0) circle (6pt);
\node at (0,0) {$i$};
\node[anchor=west] at (0.1,-0.1) {$x_i^{} \to y_i^{}$};
\end{tikzpicture} 
appears on each lineage in the $\textnormal{aAG}_i$ independently and independently for each choice of $x_i^{}, y_i^{} \in X_i$ at rate $u_i^{} M_i(x_i^{},y_i^{})$.  
\item
Each (ordered) pair of lineages in the $\textnormal{aAG}_i$ coalesces at rate $1$.
\item
Prior to the occurrence of a bad coalescence (of either type), the ARG and the aAGs are matched, that is, the aAGs agree with the marginal coalescents embedded in the ARG. With the occurrence of a bad coalescence the ARG and the aAGs are unmatched, and the aAGs keep evolving as in 1 to 3, but independently of the ARG.
\end{enumerate}

We close by finishing Example~\ref{ex:concreteexample}.
\begin{example}[Example~\ref{ex:concreteexample} continued] \label{ex:continued}
Recall that we want to verify the approximation 
\begin{equation*}
q(2 \delta_{\texttt C  \texttt A}) = q_\infty^{} (2 \delta_{\texttt C \texttt A}) + \varrho^{-1} q_1^{} (2 \delta_{\texttt C \texttt A}) + \cO(\varrho^{-2}) 
\end{equation*}
with
\begin{equation*}
q_1^{} (2 \delta_{\texttt C \texttt A})  = q_\infty^{} (2 \delta_{\texttt{C} \ast}  + 2 \delta_{\ast \texttt{A}}) - 
q_\infty^{} ( \delta_{\texttt{C} \ast} + 2 \delta_{\ast \texttt{A}} ) -
q_\infty^{} ( 2 \delta_{ \texttt{C} \ast} +  \delta_{\ast \texttt{A}} ) +
q_\infty^{} ( \delta_{\texttt{C} \ast} + \delta_{\ast \texttt{A}} ).
\end{equation*}
We let the $\textnormal{aAG}_1$ and $\textnormal{aAG}_2$ be coupled as just explained and write $\cS_1$,$\cS_2$ for the ordered samples generated by them. Similarly, we write S for the random sample generated by the ARG, more precisely, the (random) list of samples generated by the ARG. With this,
\begin{equation*}
q (2 \delta_{\texttt C \texttt A}) = \PP [ \cS = (\texttt C \texttt A, \texttt C \texttt A) ]
\end{equation*}
and
\begin{equation*}
q_\infty^{} (2 \delta_{\texttt C \texttt A}) = \PP [ \cS_1 = (\texttt C \ast, \texttt C \ast) ] \PP [ \cS_2 = (\ast \texttt A, \ast \texttt A) ].
\end{equation*}
As $\varrho$ becomes large, the most likely event is that no bad coalescence occurs, which we call $E$. On $E$, we have that 
$\cS = \cS_1 \sqcup \cS_2$, where the concatenation is defined componentwise (e.g. 
\mbox{$(\texttt C \ast, \texttt C \ast) \sqcup (\ast \texttt A, \ast \texttt A) = (\texttt C \texttt A, \texttt C \texttt A)$}).

The next likely event is that a bad coalescence occurs prior to any transition other than recombination in the ARG (and therefore also prior to any transition in the aAGs). We write $F_0,F_1,F_2$ for the events that it occurs in the ARG, in the $\textnormal{aAG}_1$ or in the $\textnormal{aAG}_2$. 
The events $F_0, F_1, F_2$ all occur with probability $1/\varrho + \cO(\varrho^{-2})$ because 
we see a coalescence at rate $2$ (recall that each \emph{ordered} pair coalesces at rate $1$), mutation at some constant rate $C$ and recombination at rate
$2 \varrho$ ($\varrho$ per line). All other events have probability of order $\varrho^{-2}$. Roughly speaking, this is because all other events require transitions that are not recombinations to occur while there is at least one lineage ancestral to both sites. And the total time before $T_{\textnormal{MRCA}}$ for which there is such a lineage is of order $\varrho^{-1}$. 

We let $F \defeq F_0 \cup F_1 \cup F_2$. Then by decomposing the representation above, we have
\begin{equation*}
q(2 \delta_{\texttt C \texttt A}) = \PP [ \cS = (\texttt C \texttt A, \texttt C \texttt A), E ] + \PP [ \cS = (\texttt C \texttt A, \texttt C \texttt A), F  ] + \cO(\varrho^{-2})
\end{equation*}
and, analogously,
\begin{equation*}
q_\infty^{} (2 \delta_{\texttt C \texttt A}) = \PP [ \cS_1 \sqcup \cS_2 = (\texttt C \texttt A, \texttt C \texttt A), E ] + \PP [ \cS_1\sqcup \cS_2 = (\texttt C \texttt A, \texttt C \texttt A), F  ] + \cO(\varrho^{-2}).
\end{equation*}
Solving for $\PP [ \cS_1 \sqcup \cS_2 = (\texttt C \texttt A, \texttt C \texttt A), E ]$ and using that $\cS = \cS_1 \sqcup \cS_2$ on $E$ then gives
\begin{equation} \label{exampledecomp}
q (2 \delta_{\texttt C \texttt A}) = q_\infty^{} (2 \delta_{\texttt C \texttt A}) + \PP [ \cS = (\texttt C \texttt A, \texttt C \texttt A), F      ]
- \PP [ \cS_1 \sqcup \cS_2 = (\texttt C \texttt A), F         ] + \cO(\varrho^{-2}).
\end{equation}
Decomposing $F = F_0 \dot \cup F_1 \dot \cup F_2$ gives
\begin{equation*}
\begin{split}
&\PP [ \cS = (\texttt{CA}, \texttt{CA}), F] \\
&\quad =  \PP [ \cS = (\texttt{CA}, \texttt{CA}), F_0] + \PP [ \cS = (\texttt CA, \texttt CA), F_1] + \PP [ \cS = (\texttt CA, \texttt CA), F_2] \\ 
&\quad = \varrho^{-1} ( \PP [\cS = \texttt C \texttt A] + 2 \PP [\cS = (\texttt{CA}, \texttt{CA})]) \\
&\quad = \varrho^{-1} (q(\delta_{\texttt{CA}}) + 2 q(2 \delta_{\texttt{CA}}) ) \\
&\quad = \varrho^{-1} (q_\infty^{}(\delta_{\texttt{CA}}) + 2 q_\infty^{}(2 \delta_{\texttt{CA}}) ) + \cO(\varrho^{-2}) \\
&\quad = \varrho^{-1} (2 q_\infty^{}  (2 \delta_{\texttt{C} \ast}  + 2 \delta_{\ast \texttt{A}}) + q_\infty^{} (\delta_{\texttt C \ast} + \delta_{\ast \texttt A})).
\end{split}
\end{equation*}
In the last step, we used that $q_\infty^{}(\delta_{\texttt{CA}})$ is the sampling probability associated with the independent aAGs, and therefore only depends on the frequencies of alleles at individual sites; see Lemma~\ref{lem:qvsqinfty}. To see the second equality, we can argue by restarting the ARG. Note that on $F_0$, we see a coalescence between the two lines in the ARG, which are ancestral to both sites. By the propagation rules explained earlier, we see the type $\texttt{CA}$ at each of them, if and only if the parental line has this type. So, we restart the ARG with a single line (ancestral to both sites) and the probability that we observe the type 
$\texttt{CA}$ at this line is
$q(\delta_{\texttt{CA}})$. On the other hand, if the bad coalescence occurs in either of the aAGs, the ARG is unaffected, and we start again with two lines at which we both observe the type $\texttt{CA}$ with probability $q(2 \delta_{\texttt{CA}})$.
Similarly, 
\begin{equation*}
\begin{split}
&\PP [\cS_1 \sqcup \cS_2 = (\texttt{CA}, \texttt{CA}), F    ] \\
& \quad = \PP   [\cS_1 \sqcup \cS_2 = (\texttt{CA}, \texttt{CA}), F_0   ] +  \PP [\cS_1 \sqcup \cS_2 = (\texttt{CA}, \texttt{CA}), F_1  ] 
+  \PP [\cS_1 \sqcup \cS_2 = (\texttt{CA}, \texttt{CA}), F_2  ] \\
&\quad = \varrho^{-1} ( q_\infty^{} ( 2 \delta_{\texttt{C} \ast} + 2 \delta_{\ast  \texttt{A}}) + q_\infty^{} (  \delta_{\texttt{C} \ast} + 2 \delta_{\ast  \texttt{A}}) +  q_\infty^{} ( 2 \delta_{\texttt{C} \ast} +  \delta_{\ast  \texttt{A}})   ),      
\end{split}
\end{equation*}
yielding the claim after insertion into Eq.~\eqref{exampledecomp}.
\end{example}

\subsection{Measure-valued formulation}
\label{subsec:couplingrigorous}

%%%MOAR NEWER%%%
Recall that in Section~\ref{sec:marg}, we introduced the measure-valued ARG (mARG) as a way of tracking the backward-in-time evolution of the ARG, or, more precisely, the types that have to be carried by the lineages in the ARG in order to yield a given sample configuration. Similarly, the aAGs are represented by the s-mARG. 
We will therefore construct a coupling $(\widetilde \cR, \widetilde \cR^\infty)$ between an mARG $\widetilde \cR$ and an s-mARG 
$\widetilde \cR^\infty$ such that $\sigma(\widetilde \cR_t) = \widetilde \cR^\infty_t$ as long as no bad coalescence occurred before time $t$, that is, as long as the underlying ARG and the aAGs are still matched.

In fact, we will construct a coupling $(\widetilde \cR, \widetilde \cR^\infty, \fm)$ where $\fm = (\fm_t^{})_{t \geqslant 0}^{}$ takes values in $0$ and $1$ and keeps track of whether the ARG and the aAGs are still matched at any given point in time; if 
$\fm_t = 1$, this means that no bad coalescence occurred until time $t$, $\widetilde \cR$ and $\widetilde \cR^\infty$ are still matched and, in particular,
$\sigma(\widetilde \cR_t^{}) = \widetilde \cR^\infty_t$. In contrast, if $\fm_t = 0$, then the ARG and the aAGs have been unmatched before time $t$.

First, we describe the transitions when $\widetilde \cR$ and $\widetilde \cR^\infty$ are matched ($\fm_t^{} = 1$), and 
$(\widetilde \cR, \widetilde \cR^\infty)$ is in a state of the form $(\nu,\sigma(\nu))$.
When they are unmatched ($\fm_t^{} = 0$), they will simply evolve independently of each other according to Defs.~\ref{def:measureARG} and~\ref{def:sm-arg}. 
Recall from Definition~\ref{def:measureARG} that, starting from state $\nu$, $\widetilde \cR$ performs independently for each $x,y \in \widetilde \cX$ a transition of the form $\nu \to C_{x,y}^{} (\nu)$ at rate
$\nu(x) (\nu - \delta_x)(y)$; any ordered pair of lines in the ARG coalesces at rate $1$, and $\nu(x) (\nu - \delta_x)(y)$ is the total number of ordered pairs of lines that have carry the (fuzzy) types $x$ and $y$ in order to produce the desired sample. 

As in the previous subsection, if $D(x) \cap D(y) = \{i\}$ for some $i \in S$, such a coalescence is called \emph{good} and matched by a transition in 
$\widetilde \cR^\infty$
of the form $\sigma(\nu) \to C_{x|_i^{}, y|_i^{}}^{} (\sigma(\nu))$. Keep in mind that whenever $D(x) \cap D(y) = \{i\}$, $x|_i^{}$ and $y|_i^{}$ are compatible if and only if $x$ and $y$ are. 
If $D(x) \cap D(y) = \varnothing$, nothing happens in $\widetilde \cR^\infty$.  In this case, $\fm$ stays at $1$.

On the other hand, if $|D(x) \cap D(y)| \geqslant 2$, $\widetilde \cR$ still performs transitions (corresponding to bad coalescence in the ARG) of the form 
$\nu \to C_{x,y} (\nu)$, while $\widetilde \cR^\infty$ stays put. In this case, $\fm$ jumps to $0$ to indicate that 
$\widetilde \cR$ and $\widetilde \cR^\infty$ are now unmatched. We call such transitions \emph{bad coalescences in $\widetilde \cR$}

So far, the rate of coalescence in $\widetilde \cR^\infty$ is too low compared to the rate in Def.~\ref{def:sm-arg}; as in the untyped, graphical setting of Subsection~\ref{subsec:couplinggraphical}, we therefore have to add
\emph{bad coalescence events in $\widetilde \cR^\infty$} to make up for this. More concretely, fix 
$i \in S$ and $x^i, y^i \in \widetilde X_{\{ i \} }$. 
According to Definition~\ref{def:sm-arg}, we want transitions $\sigma(\nu) \to C_{x^i, y^i} (\sigma(\nu))$ to occur at rate
\begin{equation*}
\sigma(\nu)(x^i) \big ( \sigma(\nu)   - \delta_{x^i}  \big ) (y^i) = 
\sum_{\substack{x \in \widetilde \cX \\ i \in D(x)}} \one_{x|_i^{} = x^i} \nu(x) 
\Big ( \sum_{\substack{y \in \widetilde \cX \\ i \in D(y)}} \one_{y|_i^{} = y^i } \nu(y) - \delta_{x^i}(y^i)  \Big).
\end{equation*}
Observing that
\begin{equation*}
\begin{split}
\sum_{\substack{x,y \in \widetilde \cX \\ i \in D(x) \cap D(y)}} &\one_{x|_i^{} = x^i} \one_{y|_i^{} = y^i} \nu(x) \delta_x(y) \\
&=
\sum_{\substack{x \in \widetilde \cX \\ i \in D(x)} } \one_{x|_i^{} = x^i} \one_{x|_i^{} =  y^i} \nu(x) \\
&=
\sum_{\substack{x \in \widetilde \cX \\ i \in D(x)} } \delta_{x^i} (y^i) \one_{x|_i^{} = x^i} \nu(x) \\
&=
\delta_{x^i}(y^i) \sigma(\nu) (x^i),
\end{split}
\end{equation*}
this is equal to
\begin{equation} \label{soll}
\sum_{\substack{x,y \in \widetilde \cX \\ i \in D(x) \cap D(y)}} \one_{x|_i^{} = x^i} \one_{y|_i^{} = y^i}  \nu(x) (\nu - \delta_x^{})(y)
\end{equation}
However, the rate at which such transitions happen in response to good coalescence in $\widetilde \cR$ is merely
\begin{equation} \label{ist}
\sum_{\substack{x,y \in \widetilde \cX \\ D(x) \cap D(y) = \{i\} }} \one_{x|_i^{} = x^i}^{}  \one_{y|_i^{} = y^i}^{} \nu(x) (\nu - \delta_x) (y).
\end{equation}
Subtracting \eqref{ist} from \eqref{soll}, we see that we need bad coalescences (in $\widetilde \cR^\infty$) between $x^i$ and $y^i$, that is, 
transitions of the form 
\begin{equation*}
(\nu,\sigma(\nu),0) \to (\nu, C_{x^i,y^i} (\sigma(\nu)) , 1)
\end{equation*}
to occur at rate 
\begin{equation*}
\sum_{\substack{x,y \in \widetilde \cX \\ \{ i \} \subsetneq D(x) \cap D(y)}} \one_{x|_i^{} = x^i} \one_{y|_i^{} = y^i} \nu (x) (\nu - \delta_x) (y).
\end{equation*}
It is easy to see that this means that transitions
\begin{equation*}
(\nu,\sigma(\nu),0) \to (\nu, C_{x|_i^{},y|_i^{}} (\sigma(\nu)) , 1)
\end{equation*}
have to occur at rate $\nu(x) (\nu - \delta_x) (y)$, independently for all $x,y \in \widetilde \cX$ with $|D(x) \cap D(y)| \geqslant 2$ and all $i \in D(x) \cap D(y)$.

Recombination events in $\widetilde \cR$ do not lead to transitions in $\widetilde \cR^\infty$. Assume that a recombination event occurs in $\widetilde \cR$ at time $t$, i.e. $\widetilde \cR_t^{} = \widetilde \cR_{t-} - \delta_x^{} + \sum_{A \in \cA} \delta_{x|_A^{}}$ for some 
$x \in \widetilde \cX$ and some $\cA \in \bP(S)$. Then, we have 
\begin{equation} \label{invarianceunderreco}
\begin{split}
\sigma(\widetilde \cR_t^{}) 
& = \sigma(\widetilde \cR_{t-}^{} ) - \sigma(\delta_x) + \sum_{A \in \cA} \sigma ( \delta_{x|_A^{}}   ) \\ 
& =  \sigma(\widetilde \cR_{t-}^{} )  - \sum_{i \in D(x)} \delta_{x|_i^{}} + \sum_{A \in \cA} \sum_{i \in D(x) \cap A} \delta_{x|_i^{}} \\ 
& =  \sigma(\widetilde \cR_{t-}^{} ),
\end{split}
\end{equation}
where the sums cancel in the last step because $\cA$ is a partition of S. 
So, simply letting $\widetilde \cR^\infty_t = \widetilde \cR^\infty_{t-}$ does the trick of preserving $\widetilde \cR^\infty_t = \sigma ( \widetilde \cR_t^{} )$. Matching mutation events is trivial since mutations occur on each site independently.
We summarise all of this by giving a formal Definition.

\begin{definition} \label{def:cmarg}
We let ($\widetilde \cR, \widetilde \cR^\infty, \fm) = \big ( (\widetilde \cR_t^{}, \widetilde \cR_t^\infty, \fm_t^{})  \big )_{t \geqslant 0}$ be a continuous-time Markov chain on
 $(\cN(\widetilde \cX) \cup \{\Delta\}) \times (\cN(\widetilde \cX_1^{}) \cup \{\Delta \} ) \times \{0,1\}$ 
with $\widetilde \cX_1^{}$ as in Eq.~\eqref{x1definition}. When $(\widetilde \cR, \widetilde \cR^\infty, \fm)$ is in a state of the form $(\nu, \mu, 0)$ for some
$\nu \in \widetilde \cX$ and $\mu \in \widetilde \cX_1^{}$,
we say that
 $\widetilde \cR$ and 
$\widetilde \cR^\infty$ are unmatched and they perform independent transitions according to Defs.~\ref{def:measureARG} and \ref{def:sm-arg}, while $\fm$ remains at $0$ for all time. While $\fm = 1$ and $\mu = \sigma(\nu)$, we say that $\widetilde \cR$ and $\widetilde \cR^\infty$ are matched and
$(\widetilde \cR, \widetilde \cR^\infty_{}, \fm)$ performs the following independent transitions.
\begin{enumerate}
\item
\textbf{Good coalescence}: Independently for all $x,y \in \widetilde \cX$ with \mbox{$|D(x) \cap D(y)| \leqslant 1$},
\begin{equation*}
(\nu,\sigma(\nu),1) \to (C_{x,y} (\nu), \sigma ( C_{x,y} (\nu)  ), 1)  
\end{equation*}
at rate $\nu(x) (\nu - \delta_x) (y)$.
\item
\textbf{Bad coalescence (in $\widetilde \cR$)} : Independently for all $x,y \in \widetilde \cX$ with $|D(x) \cap D(y)| \geqslant 2$,
\begin{equation*}
(\nu,\sigma(\nu),1) \to (C_{x,y} (\nu), \sigma(\nu),0)
\end{equation*}
at rate $\nu(x) (\nu - \delta_x)(y)$.
\item
\textbf{Bad coalescence (in $\widetilde \cR^{\infty}$)}: Independently for all $x,y \in \widetilde \cX$ with $|D(x) \cap D(y) | \geqslant 2$ and each $i \in D(x) \cap D(y)$,
\begin{equation*}
(\nu,\sigma(\nu),1) \to (\nu,C_{x|_i^{},y|_i^{}}(\sigma(\nu)),0)
\end{equation*}
at rate $\nu(x) \nu(y)$ if $x \neq y$ and at rate $\nu(x) ( \nu(x) - 1 )$ if $x = y$. 
\item
\textbf{Recombination:} Independently for all $\cA \in \bP(S)$ and all $x \in \widetilde \cX$,
\begin{equation*}
(\nu, \sigma(\nu),1) \to \Big ( \nu - \delta_x^{} 
+ \sum_{\substack{A \in \cA \\ A \cap D(x) \neq \varnothing}} \delta_{x|^{}_A}^{}, \sigma(\nu), 1 \Big )
\end{equation*}
at rate $\varrho_\cA^{} \nu(x)$.
\item
\textbf{Mutation:} Independently for each $x \in \widetilde \cX$, $i \in D(x)$ and $y_i^{},z_i^{} \in X_i$,
\begin{equation*}
(\nu,\sigma(\nu),1) \to
\begin{cases}
 ( \nu - \delta_x^{} + \delta_{m(x;y_i^{},z_i^{})}^{}, \sigma(\nu) - \delta_{x|_i^{}}^{} + \delta_{m(x;y_i^{},z_i^{})|_i^{}}^{},1) 
& \textnormal{if } m(x;y_i^{},z_i^{})(i) \neq \varnothing, \\
(\Delta,\Delta,1) & \textnormal{if } m(x;y_i^{},z_i^{})(i)= \varnothing
\end{cases}
\end{equation*}
at rate $u_i^{} M_i(y_i^{}, z_i^{}) \nu (x)$ with $m(x;y_i^{},z_i^{})$ as in Eq.~\eqref{mutantcandidates}.
\end{enumerate}
In addition, we define the \emph{unmatching time} (of $\widetilde \cR$ and $\widetilde \cR^\infty$) 
\begin{equation*}
T_u \defeq \min \{ t \geqslant 0 : \fm_t = 0 \}.
\end{equation*}
\end{definition}

Note that when $\fm_t^{} = 1$, $\widetilde \cR_t^{}$ is simple if and only if $\widetilde \cR_t^\infty$ is. In that case, no bad coalescence can occur, i.e.
we have $\fm_s^{} = 1$ for all $s > 0$.

\begin{remark}\label{rmk:asparticlesystems}
There is an obvious way to embed the measure-valued processes $\widetilde \cR$ and $\widetilde \cR^\infty$ within coupled, coalescing and fragmenting, typed particle systems. 
Starting $\widetilde \cR$ from $\nu$ and $\widetilde \cR^\infty$ from $\sigma(\nu)$, we 
start the corresponding particle systems with $\nu(x)$ particles of each type $x \in  \widetilde \cX$ and $\sigma(\nu)(x^i) = \nu^{\{i\}}(x^i)$ particles of type
$x^i \in \widetilde X_{ \{i \}}^{}$ for each $i \in S$, respectively. 

Now, 1 (and 2) in Definition~\ref{def:cmarg} imply that each ordered pair of particles with types $x$ and $y$ in $\widetilde \cR$ such that $|D(x) \cap D(y)| = 1$ ($\geqslant 2$)  independently performs a good (bad) coalescence at rate $1$, in line with the rate for such events being $\nu(x) (\nu - \delta_x^{})(y)$. Moreover, 3 implies that any ordered pair of particles in $\widetilde \cR$ with types $x$ and $y$, independently and independently for each $i \in D(x) \cap D(y)$ triggers a bad coalescence in $\widetilde \cR^\infty$ between a randomly chosen pair of particles of type $x|_i^{} \in X_i^{}$ and $y|_i^{} \in X_{ \{ i \}}^{}$. 
Furthermore, 4 implies that each particle in $\widetilde \cR$ with, say, type $x$, is, independently at rate $\varrho_\cA^{}$, fragmented into fragments of
types $x|_A^{}$ for each $A \in \cA|_{D(x)}^{}$. Finally, (5) implies that mutation happens for each particle independently and at each site $i$ at rate $u_i^{}$ and the allele change is governed by the mutation kernel $M_i^{}$; here, every particle in $\widetilde \cR$ is matched to its single-site fragments in $\widetilde \cR^\infty$. 

Because of permutation invariance, the concrete particles that are involved in each event can simply be chosen randomly and thus, the law of the particle systems are uniquely determined by the evolution of the frequency processes $\widetilde \cR$ and $\widetilde \cR^\infty$. This argument was also used in~\cite{JenkinsFearnheadSong2015} who expressed the law of the untyped ARG (for two loci) in terms of its line counting process which they termed the \emph{ancestral process}.

To make this formally rigorous, one could, for instance, assign a uniform (on $[0,1]$) label to each point mass, i.e. consider the cmARG as a process on subsets of $\widetilde \cX$ and work with relations between particles at different times; we refer the curious reader to~\cite{GrevenPfaffelhuberPokalyukWakolbinger2016} where a similar construction has been carried out for the ancestral selection graph~\cite{KroneNeuhauser97}.
\end{remark}

We have constructed $(\widetilde \cR, \widetilde \cR^\infty, \fm)$ such that the marginal law of $\widetilde \cR$ is that of a mARG, and the marginal law of 
$\widetilde \cR^\infty$ is that of an s-mARG. For completeness, we briefly summarise the arguments from the beginning of this section to prove the following 
\begin{lemma} \label{lem:iscoupling}
Let $\nu \in \cN(\widetilde \cX)$ and let $(\widetilde \cR, \widetilde \cR^\infty, \fm)$ be as in Definition~\ref{def:cmarg}, started from $(\nu,\sigma(\nu),1)$. Then, $\widetilde \cR$ and $\widetilde \cR^\infty$ are a mARG and an s-mARG, started from $\nu$ and $\sigma(\nu)$, respectively.
\end{lemma}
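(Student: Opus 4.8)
The plan is to verify that the cmARG, despite coupling the two processes, has the correct marginal laws by checking that summing (marginalising) the joint transition rates over the ``other'' coordinate reproduces exactly the transition rates of $\cR$ (Definition~\ref{def:measureARG}) and of $\cR^\infty$ (Definition~\ref{def:sm-arg}). Since all the processes in question are continuous-time Markov chains on countable state spaces, two such chains agree in distribution (with the same initial condition) if and only if their generators agree, so it suffices to compare transition rates. The only subtlety is that the cmARG has two regimes: the ``synchronised'' regime $\mu = \sigma(\nu)$, where transitions are performed jointly according to (1)--(5) of Definition~\ref{def:cmarg}, and the ``desynchronised'' regime $\mu \neq \sigma(\nu)$, where the two coordinates already evolve independently by fiat. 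In the latter regime there is nothing to prove, so the whole content is the synchronised regime.

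First I would fix a state $(\nu, \sigma(\nu))$ and compute the marginal rates for $\widetilde \cR$. For the $\widetilde \cR$-coordinate, project each transition in (1)--(5) onto its first component: good coalescences (1) and bad coalescences of type~1 (2) together contribute, for every ordered pair $x, y \in \widetilde\cX$ (with the usual distinction $x = y$ versus $x \neq y$), a transition $\nu \to \nu - \delta_x - \delta_y + \delta_{x \sqcup y}$ (or $\to \Delta$ if incompatible) at rate $\nu(x)(\nu - \delta_x)(y)$ --- regardless of whether $|d(x) \cap d(y)| \leq 1$ or $\geq 2$, because the union of the two cases in (1) and (2) is exactly ``all pairs''. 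Bad coalescences of type~2 (3) act as the identity on the first coordinate, so they contribute nothing to the $\widetilde\cR$-marginal. Recombination (4) and mutation (5) restrict to exactly the recombination and mutation transitions of Definition~\ref{def:measureARG}, at the same rates. Hence the marginal generator of $\widetilde\cR$ equals that of $\cR$.

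Next I would do the symmetric computation for $\widetilde \cR^\infty$. Project (1)--(5) onto the second component. Good coalescence (1) with $|d(x) \cap d(y)| = 1$, say $d(x) \cap d(y) = \{i\}$, contributes $\sigma(\nu) \to \sigma(\nu) - \delta_{x|_i} - \delta_{y|_i} + \delta_{x|_i \sqcup y|_i}$ at rate $\nu(x)(\nu - \delta_x)(y)$, while (1) with $d(x) \cap d(y) = \varnothing$ acts as the identity on $\sigma(\nu)$; bad coalescence of type~1 (2) also acts as the identity on the second coordinate. Bad coalescence of type~2 (3) contributes $\sigma(\nu) \to \sigma(\nu) - \delta_{x|_i} - \delta_{y|_i} + \delta_{x|_i \sqcup y|_i}$ for each $i \in d(x) \cap d(y)$ when $|d(x) \cap d(y)| \geq 2$. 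The crux is then a bookkeeping identity: for fixed $i$ and fixed $a, b \in \widetilde X_i$, the total rate at which a coalescence between a type-$a$ and a type-$b$ particle is triggered in the second coordinate equals $\sum_{x,y : x|_i = a,\, y|_i = b}\nu(x)(\nu - \delta_x)(y) = \sigma(\nu)(a)(\sigma(\nu) - \delta_a)(b)$, where the sum ranges over \emph{all} ordered pairs $x, y$ with the prescribed $i$-marginals (those with $d(x) \cap d(y) = \{i\}$ come from (1), those with $|d(x) \cap d(y)| \geq 2$ come from the $i$-summand of (3), and there are no others since $i \in d(x) \cap d(y)$ is forced). This is exactly the coalescence rate demanded by Definition~\ref{def:sm-arg}. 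Recombination (4) is the identity on the second coordinate, and mutation (5) restricts to the mutation transitions of Definition~\ref{def:sm-arg} at the right rates (after summing over which particle $x$ with given $x|_i$ carried the mutation, which again rebuilds $\sigma(\nu)(x_i) u_i M_i(y_i, z_i)$). So the marginal generator of $\widetilde\cR^\infty$ equals that of $\cR^\infty$.

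Finally I would remark that the desynchronised regime is consistent with the above, since there the coordinates already run as independent copies of $\cR$ and $\cR^\infty$, and that the cemetery state $\Delta$ is absorbing in all processes and is handled uniformly. The main (and essentially only) obstacle is the counting identity in the previous paragraph --- making sure that the type-1/type-2 split of ``bad'' coalescences neither double-counts nor undercounts the pairs contributing to a given single-site coalescence in $\cR^\infty$, and that the $|d(x) \cap d(y)| \leq 1$ versus $\geq 2$ dichotomy in (1)--(2) exactly partitions all ordered pairs. Once that is pinned down, the rest is routine marginalisation, exactly as the paper anticipates.
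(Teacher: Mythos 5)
Your proposal is correct and takes essentially the same route as the paper's own proof: marginalise the cmARG transition rates coordinate by coordinate, noting that (1)+(2) together recover all ordered pairs for the $\widetilde\cR$-marginal, while (1) restricted to $|d(x)\cap d(y)|=1$ together with (3) recover the single-site coalescences for the $\widetilde\cR^\infty$-marginal via the identity $\sum_{x,y\,:\,x|_i^{}=a,\,y|_i^{}=b}\nu(x)(\nu-\delta_x^{})(y)=\sigma(\nu)(a)\bigl(\sigma(\nu)-\delta_a^{}\bigr)(b)$. This is precisely the computation carried out in the paper, so nothing further is needed.
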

\begin{proof}
We see that taking (1) and (2) from Def.~\ref{def:cmarg} together and considering only the first component gives the transitions in (1) from Def.~\ref{def:measureARG}. The left component of (4) in Def.~\ref{def:cmarg} is precisely (2) in Def.~\ref{def:measureARG}. The left component of the transition (5) in Def.~\ref{def:cmarg} is just (3) in Def.~\ref{def:measureARG}. This shows that $\widetilde \cR$ is a mARG.

To see that $\widetilde \cR^\infty$ is an s-mARG, note that, by (5) of Def.~\ref{def:cmarg}, transitions 
\begin{equation*}
\sigma(\nu) \to 
\begin{cases}
\sigma(\nu) - \delta_{x^i}^{} + \delta_{m(x^i;y_i^{},z_i^{})}^{} & \textnormal{if } m(x_i^{};y_i^{},z_i^{}) \neq \varnothing ,\\
\Delta & \textnormal{if } m(x^i;y_i^{},z_i^{})  = \varnothing,
\end{cases}
\end{equation*}
occur independently for different $x^i \in X_{\{i\}}$ (as well as different $y_i^{}$, $z_i^{}$) because $x \neq y$ when $x|_i^{} \neq y|_i^{}$. The total transition rate is given by
\begin{equation*}
u_i^{} M_i^{}(y_i^{},z_i^{}) \sum_{x \in \widetilde \cX} \nu(x) \one_{x|_i^{} = x^i}^{}(x) =  u_i^{} M_i^{}(y_i^{},z_i^{})  \sigma(\nu) (x^i) = u_i^{} M_i^{}(y_i^{},z_i^{})  \sigma(\nu)(x^i),
\end{equation*}
in line with 2 in Def.~\ref{def:sm-arg}.

For $x^i \in \widetilde X_{\{ i \}}$ and $y^i \in \widetilde X_{ \{ i \}}^{}$, transitions $\sigma(\nu) \to C_{x^i, y^i} (\sigma(\nu))$ happen independently, at rate 
\begin{equation*}
\sum_{\substack{x,y \in \widetilde \cX \\ D(x) \cap D(y) = \{i\} }} \one_{x|_i^{} = x^i}^{}  \one_{y|_i^{} = y^i}^{} \nu(x) (\nu - \delta_x) (y)
\end{equation*}
due to good coalescence and at rate 
\begin{equation*}
\sum_{\substack{x,y \in \widetilde \cX \\ \{ i \} \subsetneq D(x) \cap D(y)}} \one_{x|_i^{} = x^i} \one_{y|_i^{} = y^i} \nu (x) (\nu - \delta_x) (y)
\end{equation*}
due to bad coalescence.
So, the rate is 
\begin{equation*}
\sum_{\substack{x,y \in \widetilde \cX \\  i \in  D(x) \cap D(y)}} \one_{x|_i^{} = x^i} \one_{y|_i^{} = y^i} \nu (x) (\nu - \delta_x) (y),
\end{equation*}
which we have already seen (cf. Eq.~\eqref{soll} and the preceding calculation) to be equal to 
$\sigma(\nu) (x^i) \big ( \sigma(\nu) - \delta_{x^i}  \big )(y^i)$, 
in line with 1 in Definition~\ref{def:sm-arg}.
\end{proof}

In what follows, we will write $\PP_\nu^{}$ for the law of $(\widetilde \cR, \widetilde \cR^\infty,\fm)$, started from $(\nu,\sigma(\nu),1)$, and $\EE_\nu$ for the corresponding expectation. We will also drop the tilde and work throughout with versions of $\cR$ and $\cR^\infty$ that are coupled in this way.

Our next Lemma shows that for large $\varrho$, the probability that $\cR$ and $\cR^\infty$ unmatch due to bad coalescence is of order 
$\varrho^{-1}$.  In fact, we know a little bit more: with high probability, the bad coalescence that led to the unmatching is the first event apart from recombination, i.e., it occurs prior to any other coalescence or mutation event. It is this observation that will make the asymptotic analysis of
$(\cR,\cR^\infty)$ and therefore of $q(\nu)$ tractable. 

In particular, this means that, with high probability $\cR^\infty_t = \cR^\infty_0$ (or, equivalently, $\sigma (\cR_t^{}) = \sigma(\cR_0^{})$ due to $\cR$ and $\cR^\infty$ being matched)
for all $t$ with
$\fm_t^{} = 1$. At the same time, there also being no coalescence between particles observed at disjoint sites (which would be compatible with $\cR^\infty_t = \cR_t^{}$ for all $t < T_u^{}$); more formally, this means that $\cR_s (\widetilde \cX) \leqslant \cR_t (\widetilde \cX)$ 
for all $0 \leqslant t < T_u$, i.e. the total number of particles is nondecreasing as long as $\fm_t^{} = 1$.

\begin{lemma}\label{lem:workhorse}
Assuming $( \cR,  \cR^\infty)$ to be coupled as in Definition~\ref{def:cmarg}, we define the following events.
\begin{equation*}
E \defeq \{ T_u = \infty  \} = \{ \fm_t^{} = 1 \textnormal{ for all } t \geqslant 0   \},
\end{equation*}
%\todo{can we drop this additional stopping time?}
%where $\widetilde T \defeq \max(T,T^\infty)$ is the smallest time $t$ such that both $\cR_t^{}$ and $\cR_t^\infty$ are either simple or in the cemetary. Note %that since $\cR$ and $\cR^\infty$ decouple and evolve independently of each other after the occurrence of a bad coalescence, it might happen that
%$\cR_{\widetilde T}^{}$ is in the cemetary while $\cR_{\widetilde T}^\infty$ is simple, or vice versa. 
and
%\begin{equation*}
%F \defeq \{ \textnormal{The first transition apart from recombination is a bad coalescence} \}.
%\end{equation*}
\begin{equation*}
F \defeq \{ T_u < \infty, 
\cR_t^\infty = \cR_0^\infty  \textnormal{ for all } t < T_u  \textnormal{ and } \cR_s^{} (\widetilde \cX) \leqslant \cR_t^{} (\widetilde \cX)
\textnormal{ for all } 0 \leqslant s \leqslant t < T_u  \}.
\end{equation*}
Note the strict inequality in the definition of F.
Then, the following holds.
\begin{enumerate}
%\item
%On $E$, we have $\widetilde T = T^\infty = T$ and $\sigma(\cR_t^{}) = \cR^\infty_t$ for all $t \in [0,T]$
\item
$\PP_\nu(E) = 1 - \cO(\varrho^{-1})$.
\item
$\PP_\nu(E \cup F) = 1 - \cO(\varrho^{-2})$.
\end{enumerate}
\end{lemma}

\begin{proof}
%The statement (1) is easily shown by applying $\sigma$ to the left component of the state after each transitions in Definition~\ref{def:cmarg}.

%To show (2) and (3), 
Since bad coalescence can only occur at times when $\cR$ gives mass to types $x$ with $|D(x)| \geqslant 2$, we introduce two sequences of stopping times, $t_1^{},t_2^{},\ldots,$ and $s_1^{},s_2^{},\ldots$. Let
$s_1^{} \defeq 0$. We call a particle a \emph{singleton} if its type is observed at a single site only and define inductively for all $i \geqslant 1$
\begin{equation*}
\tau_i^{} \defeq \inf \{ s > s_i^{} : \cR_s^{} (x) = 0 \textnormal{ for all } x \in \widetilde \cX \textnormal{ with } |D(x)| \geqslant 2  \},
\end{equation*} 
and
\begin{equation*}
s_{i+1}^{} \defeq \inf \{ s > t_i^{} : \textnormal{there are } x \in \widetilde \cX \textnormal{ with } |D(x)| \geqslant 2 \textnormal{ and }\cR_s^{} (x) > 0 \}.
\end{equation*}
Then, bad coalescence can only occur in the open intervals $(s_i^{}, t_i^{})$. 
The claims will follow from the following two facts, which will be proven below.
\begin{enumerate}[label=(\alph*)]
\item
For all $i$, the probability that at least $k$ events occur in $(s_i^{}, t_i^{})$ that are not recombinations is bounded by
$\cO(\varrho^{-k})$.
\item
The total number of such intervals before $\cR$ becomes simple has a subexponential tail, i.e.
$
G \defeq \max \{i \in \NN : \cR_{t_i^{}} \textnormal{ is not simple}\}
$
satisfies
$
\PP_\nu (G \geqslant t)  \leqslant C \gamma^{t}
$
for some constant $C$ and $\gamma \in (0,1)$.
\end{enumerate}

Let's first see how (a) and (b) together imply the claims. For 1, note that (b) implies the existence of a deterministic function $\varrho \mapsto T_\varrho^{} = \cO(\log(\varrho^{-2}))$ such that 
\mbox{$\PP_\nu ( G \geqslant T_\varrho) \leqslant \varrho^{-2}$}.
Since $\cR$ will stay simple forever once it becomes simple (and we will then never see a bad coalescence), any bad coalescence must occur during 
$(s_i^{}, t_i^{})$ for some $1 \leqslant i \leqslant G$. Moreover, for a bad coalescence to occur in $(s_i^{}, t_i^{})$ for 
$i \geqslant 2$, at least one additional coalescence event must occur in that time-interval prior to the bad coalescence; this is because at time $s_i^{}$, there is only one
$x^\ast$ with $|D(x)^\ast| \geqslant 2$ with $\cR_s (x^\ast) = 1$, and for all $x \neq x^\ast$ with $|D(x)| \geqslant 2$, $\cR_s (x) = 0$ (only one non-singleton type is produced by the coalescence at time $s_i^{}$). Thus, we can utilise a simple union bound and see
\begin{equation*}
\begin{split}
\PP_\nu ( T_u < \infty)  
& \leqslant \PP_\nu ( T_u \in (s_i^{}, t_i^{}) \textnormal{ for some } 1 \leqslant i \leqslant G   ) \\
&  \leqslant \PP_\nu ( T_u \in (s_i^{}, t_i^{}) \textnormal{ for some } 2 \leqslant i \leqslant G   ) + \cO(\varrho^{-1}) \\
& \leqslant \PP_\nu ( T_u \in (s_i^{}, t_i^{}) \textnormal{ for some } 2 \leqslant i \leqslant T_\varrho   ) + \cO(\varrho^{-1}) \\
&  \leqslant T_\varrho \cO (\varrho^{-2}) + \cO(\varrho^{-1})  \\
&  = \cO(\varrho^{-1}).
\end{split}
\end{equation*}

For 2, we need to show that 
\begin{equation*}
\PP_\nu ( T_{\neq} < T_u < \infty ) = \cO(\varrho^{-2}).
\end{equation*}
where $T_{\neq} \defeq \min \{ t \geqslant 0 : \sigma ( \cR_t ) \neq \sigma ( \cR_0  ) \}$.
Recall from Eq.~\eqref{invarianceunderreco} that $\sigma(\cR_t^{})$ is invariant under recombination. Thus, in order for the event inside this probability to occur, we need to observe at least one transition prior to $T_u$ that does not correspond to recombination. More precisely, when 
$T_u \in (s_1^{}, t_1^{})$, we would need to observe an addtional non-recombination event, so a total of two non-recombination events.  On the other hand, if $T_u \in ( s_i^{}, t_i^{}   )$ for $i \geqslant 2$, we need to (as in the proof of 1) observe two coalescence events in $(s_i^{}, t_i^{})$, including the bad one. However, the first of these two also leaves $\sigma(\cR)$ invariant, as it has to be a coalescence between two singleton types. Thus, a \emph{third} non-recombination event (either a mutation or another coalescence needs to be observed). Thus, a union bound analogous to the one we saw above yields
\begin{equation*}
\PP_\nu ( T_{\neq } < T_u < \infty   ) \leqslant T_\varrho \cO(\varrho^{-3}) + \cO(\varrho^{-2}) = \cO(\varrho^{-2}).
\end{equation*}
Now, we show (a) and (b) by comparing the coalescent to a random walk. 

\emph{Proof of (a):} Note that on $(s_i^{}, t_i^{})$, the total rate of \emph{nonsilent} recombination is bounded from below by
$c \varrho$ where $c \defeq \min \{r_\cA^{} : \cA \in \bP(S), r_\cA^{} \neq 0 \}$ . At the same time, the total rate of events that are not recombinations is bounded from above by a uniform constant $C$, because the total number of particles remains bounded at all times. Also note that any coalescence increases the possible number of consecutive nontrivial fragmentations by not more than $n = |S|$. Thus, the number of nonfragmentation events in $(s_i^{}, t_i^{})$ events is stochastically dominated by the number of upward steps performed by a random walk on $\ZZ$, started from some $M > 0$ and before hitting $\ZZ_{\leqslant 0}$, with homogeneous transition probabilities
\begin{equation*}
p(z,z-1) = \frac{\varrho c}{\varrho c + C}, \quad p(z,z+n) = \frac{C}{\varrho c + C}.
\end{equation*}
Lemma~\ref{lem:excursions} then yields the desired bound.

\emph{Proof of (b):} If $\nu = \cR_t^{}$ is not simple, then either there are subsets $A$ and $B$ of $S$ with $\nu(\widetilde X_A^{}) > 0$, $\nu(\widetilde X_B^{}) > 0$ and 
$A \cap B \neq \varnothing$. Or, there is an $A \subseteq S$ with $\nu (\widetilde X_A^{}) \geqslant 2$.
In the former case, a coalescence between particles of (compatible) types $x_A^{} \in \widetilde X_A^{}$ and $x_B^{} \in \widetilde X_B^{}$ leads to a decrease of the total masses of $\widetilde X_A^{}$ and $\widetilde X_B^{}$ (while increasing that of $\widetilde X_{A \cup B}^{}$). In the latter case, a coalescence between two compatible particles with types $x,y \in \widetilde X_A^{}$ decreases the mass of $\widetilde X_A^{}$. In any case, we call such a coalescence \emph{reducing}.

Now, if $\cR_{t_i^{}}^{}$ is not simple, there is at least one possible reducing coalescence. Also, the total coalescence rate can be bounded by the same constant $C$ used earlier. Therefore, the coalescence at time $s_{i+1}^{}$ is a reducing coalescence with probability at least $1/C$. And because $\cR$ can only perform a finite number of reducing coalescences before becoming simple or hitting $\Delta$, we see that $G$ is stochastically dominated by a random variable with negative binomial distribution. In particular, $G$ has a subexponential tail.
\end{proof}

\begin{remark}\label{rem:uniform}
The proof of Lemma~\ref{lem:workhorse} also shows that for any $M > 0$, the constant implied by the $\cO$ is uniform for all $\nu$ with $\| \nu \| \leqslant M$.
\end{remark}

By Eq.\eqref{nonsimplesample} and Lemma~\ref{lem:workhorse} (2), we have
\begin{equation*}
q (\nu) = \EE_\nu \big [ q ( \cR_T   )   \big ] = \EE_\nu \big [ q_\infty^{} ( \sigma(\cR^{}_T   )) , E  \big ] + \EE_\nu \big [ q_\infty^{} ( \sigma (\cR_T )  ) , F  \big ] + \cO(\varrho^{-2}),
\end{equation*} 
where we used Lemma~\ref{lem:qvsqinfty} and that $\cR_T$ is by definition of $T$ either simple or $\Delta$.
On $E$, we have $\fm_t = 0$ for all $t \geqslant 0$, meaning that $\cR_t^\infty = \sigma (\cR_t^{})$ for all $t \geqslant 0$ and $T^\infty = T$ because
clearly, $ \sigma (\cR_t^{})$ is simple if and only if $\cR_t^{}$ is.
 Therefore, the right-hand side is equal to
\begin{equation} \label{aewkfj}
\EE_\nu \big [ q ( \cR_T   )   \big ] = \EE_\nu \big [ q_\infty^{} ( \sigma( \cR^\infty_{T^\infty})   ) , E  \big ] + \EE_\nu \big [ q_\infty^{} ( \sigma (\cR_T )  ) , F  \big ] + \cO(\varrho^{-2}).
\end{equation}
Similarly, we can use Eq.~\eqref{nonsimplesampleq} and Lemma~\ref{lem:workhorse} (2) to see that
\begin{equation*}
q_\infty^{} (\nu) = q_\infty^{} (\sigma(\nu)) = \EE_\nu \big [  q_\infty^{} (\sigma (\cR^\infty_{T^\infty})), E  \big ] + \EE_\nu \big [  q_\infty^{}(\sigma(\cR^\infty_{T^\infty})), F          \big ] + \cO(\varrho^{-2}).
\end{equation*}
Solving for $\EE_\nu \big [  q_\infty^{}(\sigma(\cR^\infty_{T^\infty})), E          \big ]  $ and inserting the result into Eq.~\eqref{aewkfj}, we arrive at
\begin{equation*}
q(\nu) = q_\infty^{} \big ( \sigma(\nu)   \big )  + \EE_\nu \big [ q_\infty^{}( \sigma(\cR^{}_T)), F \big ] - \EE_\nu \big [ q_\infty^{}( \sigma(\cR^\infty_{T^\infty})), F  \big ] +  \cO (\varrho^{-2}).
\end{equation*}
Comparing this with Eq.~\eqref{refinedapproximation}, it is clear that to get a handle on the first-order term in Eq.~\eqref{samplingformula}, we need to evaluate the difference
\begin{equation}\label{q1asexpectation}
\EE_\nu \big [ q_\infty^{}(\sigma(\cR_T^{})), F \big ] - \EE_\nu \big [ q_\infty^{}(\sigma(\cR^\infty_{T^\infty})),  F  \big ].
\end{equation}
Note that this is in fact of order $\cO(\varrho^{-1})$ because $F$ is a subset of the complement of $E$ and $\PP_\nu (E) = 1 - \cO(\varrho^{-1})$ by 1 of
Lemma~\ref{lem:workhorse}.

Recall that on $F$, we have in particular that $T_u^{} < \infty$. Thus, we can disintegrate the expectation with respect to 
$\cR_{T_u^{}}$, i.e.
\begin{equation*}
\begin{split}
\EE_\nu \big [ q_\infty^{} (\sigma(\cR_T^{})), F    \big ] 
& = \EE_\nu \big [ \EE_{\cR_{T_u}^{}} [ q_\infty^{} (\sigma(\cR_T^{}) ) ], F   \big ] \\
& = \EE_\nu \big [ q(\cR_{T_u}^{})   ,F        \big ]  \\
& =  \EE_\nu \big [ q_\infty(\sigma(\cR_{T_u}^{})),F \big ] + \cO(\varrho^{-2}).
\end{split} 
\end{equation*}
Note that the last step uses that $q(\cR_{T_u}^{} ) =  q_\infty(\sigma(\cR_{T_u}^{})) + \cO(\varrho^{-1})$ 
(locally) uniformly in the argument and $\PP_\nu (F) = \cO(\varrho^{-1})$.
Similarly,
\begin{equation*}
\EE_\nu \big [ q_\infty^{}(\sigma(\cR^\infty_{T^\infty})),  F  \big ] =  \EE_\nu \big [ q_\infty^{} (\sigma(\cR_{T_u}^\infty))    ,F       \big ];
\end{equation*}
here, the $\sigma$ is only there for cosmetic reasons because $\cR^\infty$ is supported on singletons. 

To summarise, we can (up to a negligible error of order $\varrho^{-2}$) identify $\varrho^{-1} q_1^{} (\nu)$ in Eq.~\eqref{samplingformula} with
\begin{equation} \label{mainthingtoevaluate}
\EE_\nu \big [ q_\infty(\sigma(\cR_{T_u}^{})),F \big ]  - \EE_\nu \big [ q_\infty^{} (\sigma(\cR_{T_u}^\infty))    ,F  \big ]
\end{equation}

Because we have on $F$ that $T_u^{} < \infty$ while $\sigma (\cR_t^{}) = \sigma ( \cR_0^{})$ for all 
$t < T_u$, $\sigma(\cR_{T_u}^{})$ and $\cR_{T_u}^\infty$ only depend on
the bad coalescence that occurred at time $T_u$. For instance, if 
\begin{equation*}
\cR_{T_u}^{} = \cR_{T_u -}^{} - \delta_x - \delta_y + \delta_{x \sqcup y}
\end{equation*}
for some (compatible) $x, y \in \cX$ with $|D(x) \cap D(y)| \geqslant 2$ (that is, we see a bad coalescence in $\cR$), then we have
\begin{equation*}
\begin{split}
\sigma(\cR_{T_u}^{}) \\ 
& \quad = \sigma(\cR_{T_u - }^{}) - \sum_{i \in D(x)} \delta_{x|_i^{}} - \sum_{i \in D(y)} \delta_{y|_i^{}} + \sum_{i \in D(x) \cup D(y)} \delta_{(x \sqcup y)|_i^{}} \\
& \quad = \sigma(\cR_0^{}) - \sum_{i \in D(x) \cap D(y)} \delta_{x|^{}_i} \\
& \quad = \sigma(\cR_0^{}) - \sigma(\delta_{x|_{D(x) \cap D(y)}^{}}).  
\end{split}
\end{equation*}
Of course, if the types are incompatible, we have $q_\infty^{} (\cR_T^\infty) = 0$, so such events do not contribute to Eq.~\eqref{q1asexpectation}. Put more succinctly, $q_\infty^{} (\sigma (\cR_T^\infty ))$ is either $0$ or of the form \mbox{$q_\infty^{} (\sigma(\cR_0^{}) - \sigma(\delta_z))$} for some $z \in \widetilde X$ with $|D(z)| \geqslant 2$. At the same time, because a bad coalescence in $\cR$ is not matched by a transition in $\cR^\infty$,  we have
$q_\infty^{} (\sigma (\cR_{T_u}^\infty)) = q_\infty^{} (\sigma(\cR_0^\infty))$.

Similarly, if at time $T_u$ we observe a bad coalescence in $\cR^\infty$ , we have 
$
q_\infty^{} ( \sigma( \cR_{T_u}^{} )    ) = q_\infty^{} (\sigma (\cR_0^{} ) )
$
while, for some $i \in S$ and $x \in X_{ \{ i \}  }$, 
$
q_\infty^{} ( \sigma( \cR_{T_u}^\infty )    ) =  q_\infty^{} (\sigma (\cR_0^\infty ) - \delta_x ) = q_\infty^{} (\sigma (\cR_0^\infty ) - \sigma(\delta_x )),
$
or the right-hand side vanishes if the coalescence was between incompatible types.
\begin{remark}
Note that the types $x$ are, in fact, elements of $\cX$, i.e. exact types. This is because we start $\cR$ and $\cR^\infty$ from $\nu$ supported on exact types, and on the event $F$, no fuzzy types arise prior to $T_u$.
\end{remark}

From this, we see that the difference of expectations in Eq.~\eqref{mainthingtoevaluate} is of the form
\begin{equation} \label{roughsamplingformula}
\sum_{x \in \cX} q_\infty^{} \big ( \sigma(\nu) - \sigma(\delta_x)    \big ) \cdot K(x),
\end{equation}
with some coefficients $K(x)$, depending on $\varrho$. To determine them, note that 2 and 3 in Defintion~\ref{def:cmarg} imply the following Poissonian construction of bad coalescence events. Start by letting $(\cR,\cR^\infty)$ evolve, but at first only taking into account transitions in 1, 4, and 5 (good coalescence, recombination, and mutation). Alongside this, for all $x,y \in \widetilde \cX$ with $|D(x) \cap D(y)| \geqslant 2$, run independent Poisson point processes $\pi_{x,y}^{}$ on 
$\RR_{\geqslant 0}$ with inhomogeneous rates $\cR_t (x) (\cR_t - \delta_x) (y) (|D(x) \cap D(y)| + 1)$. When we encounter an atom $t$ of $\pi_{x,y}^{}$, we say that, at time $t$, a bad coalescence is triggered by the pair $(x,y)$. In that case, we choose an element of $\{ 0 \} \cup (D(x) \cap D(y))$ uniformly. If we choose $0$, we say that \emph{bad coalescence is triggered by the pair $(x,y)$ in $\cR$}, and let
$(\cR_t^{}, \cR_t^\infty) = (C_{x,y} (\cR_{t-}^{}), \cR_{t-}^\infty)$. If we choose $i \in D(x) \cap D(y)$, we say that \emph{bad coalescence is triggered by the pair $(x,y)$ at site $i$} and let
$(\cR_t^{}, \cR_t^\infty) = (\cR_{t-}^{}, C_{x|_i^{},y|_i^{}}(\cR_{t-}^\infty))$. Note that the factor $(|D(x) \cap D(y)| + 1)$ in the intensity of $\pi_{x,y}^{}$ is chosen such that each of these transitions is independently triggered at rate $\cR_t (x) (\cR_t - \delta_x) (y)$, as in 2 and 3 of Definition~\ref{def:cmarg}.

Next, we express the coefficients $K(x)$ in Eq.~\eqref{roughsamplingformula} in terms of the probabilities
\begin{equation*}
p_{x,y}^{} \defeq \PP_\nu (F \cap \{ \textnormal{The bad coalescence at time } T_u \textnormal{ is triggered by } (x,y) \textnormal{ in } \cR \}   ).
\end{equation*}
Note that $p_{x,y}^{}$ is also the probability that bad coalescence is triggered by $(x,y)$ at site $i$ for any $i \in D(x) \cap D(y)$. Also keep in mind that on $F$, there are no mutations prior to $T_u$ and hence the bad coalescence has to be triggered by a pair of exact types. 

We start with $K(\epsilon)$, the coefficient in front of $q_\infty^{} \big ( \sigma(\nu)  \big )$. Eq.~\eqref{mainthingtoevaluate} and the discussion below it shows that 
\begin{equation*}
K(\epsilon) =  \sum_{\substack{x,y \in \cX \\ |D(x) \cap D(y)| \geqslant 2}} p_{x,y}^{} (|D(x) \cap D(y)| - 1);
\end{equation*}
for each $x,y$, we get a positive contribution from the first expectation due to $(x,y)$ inducing bad coalescence  at each site $i \in |D(x) \cap D(y)|$, and a negative contribution due to $(x,y)$ inducing bad coalescence in $\cR$.
For $z \in \cX$ with $|D(z)| = 1$, we only get negative contributions from the second expectation, due to $(x,y)$ inducing bad coalescence at the unique site $i \in D(z)$. So,
\begin{equation*}
K(z) = - \sum_{\substack{x,y \in \cX \\ D(z) \subsetneq D(x) \cap D(y)}} p_{x,y}^{} \one_{x|_{D(z)}^{} = y|_{D(z)}^{} = z}.
\end{equation*}
Here, $D(z)$ has to be a proper subset because it has cardinality one.
Finally, if $z \in \cX$ with $|D(z)| \geqslant 2$, we only get positive contributions from the first expectation, due to bad coalescence induced in $\cR$.
\begin{equation*}
K(z) = \sum_{\substack{x,y \in \cX \\ D(z) = D(x) \cap D(y)}}  p_{x,y}^{} \one_{x|_{D(z)}^{} = y|_{D(z)}^{} = z}.
\end{equation*}
These three sums can be evaluated via an inclusion-exclusion principle, which is the content of the next and final section.

\section{Taming bad coalesence} \label{sec:taming}
In order to streamline our argument we need one additional piece of notation
\begin{definition}
For all $z \in \cX$ and $A \subseteq S$, we write $P(A,z)$ for the probability that a  pair $(x,y) \in \cX^2$ with 
$D(x) \cap D(y) = A$ and $x|_{D(z)}^{} = y|_{D(z)}^{} = z$ triggers a bad coalescence in $\cR$ (or, equivalently, a  bad coalescence in $\cR^\infty$ at site $i$ for any $i \in A$), prior to any other coalescence or mutation event.
\end{definition}
%This definition only explicitly deals with bad coalescence in $\cR$. But note that for any $i \in A$, $P(A,z)$ is also the probability that a bad %coalescence is triggered by this pair at site $i$ in $\cR^\infty$; see the definition of $p_{x,y}^{}$ above and the surrounding discussion.  

Keep in mind that because we are interested in the probabilities of events that happen before mutation, we will be working with \emph{exact} types (elements of $\cX$) rather than fuzzy types (elements of
$\widetilde \cX$). We will also be suppressing the dependence on the initial condition $\nu$.

With this, we can rewrite the three sums from the end of last section as follows: We have
\begin{equation} \label{zerosites}
K(\epsilon) = \sum_{\substack{ A \subseteq S \\ |A| \geqslant 2}} P(A,\epsilon) \big ( |A| -1     \big ),
\end{equation}
for $z \in \cX$ with $|D(z)| = 1$ we have
\begin{equation} \label{onesite}
K(z) = - \sum_{D(z) \subsetneq A \subseteq S} P(A,z),
\end{equation} 
and for $|D(z)| \geqslant 2$ we have 
\begin{equation} \label{atleasttwosites}
K(z) = P(D(z),z).
\end{equation}
The main task is to evaluate $P(A,z)$. For this, we will need the following Lemma, which is a consequence of~\cite[Chapter IV, 4.18]{Aigner79} for the partially ordered set $(2^S, \subseteq)$ and the field of real numbers, where $2^S$ is the powerset of $S$.
\begin{lemma} \label{lem:exclincl}
Let $g : 2^S \to \RR$. Setting
\begin{equation*}
G(A) \defeq \sum_{\udo{B} \supseteq A} g(B),
\end{equation*}
$g$ can be recovered from $G$ via
\begin{equation*}
g(A) = \sum_{\udo B \supseteq A}  (-1)^{|B \setminus A|} G(B).
\end{equation*}
Here the underdot marks the summation variable. \qed
\end{lemma}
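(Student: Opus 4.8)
The plan is to prove the identity by the standard argument for Möbius inversion over the Boolean lattice: substitute the definition of $G$ into the right-hand side and interchange the two sums, which are finite since $M$ is. Concretely, I would compute
\begin{equation*}
\begin{split}
\sum_{\udo B \supseteq A} (-1)^{|B \setminus A|} G(B)
&= \sum_{B \supseteq A} (-1)^{|B \setminus A|} \sum_{C \supseteq B} g(C) \\
&= \sum_{C \supseteq A} g(C) \sum_{A \subseteq B \subseteq C} (-1)^{|B \setminus A|},
\end{split}
\end{equation*}
where in the second equality I have, for each fixed $C \supseteq A$, collected all the terms involving $g(C)$; these are indexed precisely by the sets $B$ with $A \subseteq B \subseteq C$.

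The crux is to evaluate the inner sum. Parametrising $B = A \cup D$ with $D \subseteq C \setminus A$ — a bijection between $\{B : A \subseteq B \subseteq C\}$ and $2^{C \setminus A}$ under which $|B \setminus A| = |D|$ — turns it into $\sum_{D \subseteq C \setminus A} (-1)^{|D|}$; grouping the $D$ by cardinality and applying the binomial theorem gives $\sum_{k=0}^{|C \setminus A|} \binom{|C \setminus A|}{k} (-1)^k = (1-1)^{|C \setminus A|}$, which is $1$ when $C = A$ and $0$ when $C \supsetneq A$. Substituting this back collapses the double sum to the single term $g(A)$, which is exactly the asserted formula.

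There is no genuine obstacle here: the only points needing a line of care are the reindexing $B \leftrightarrow B \setminus A$ inside the inner sum and the remark that finiteness of $M$ legitimises the interchange of summations. Equivalently, one could simply invoke that the Möbius function of $(2^M, \subseteq)$ equals $\mu(A,B) = (-1)^{|B \setminus A|}$ and quote the general Möbius inversion formula; the computation above is precisely that special case and, incidentally, reproves the formula for $\mu$ along the way.
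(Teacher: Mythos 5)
Your proof is correct. The paper itself offers no proof of this lemma --- it is stated with a reference to Aigner's \emph{Combinatorial Theory} and closed with \verb|\qed| --- so there is nothing to compare against; your argument (substitute the definition of $G$, interchange the finite sums, and evaluate the inner alternating sum $\sum_{A \subseteq B \subseteq C}(-1)^{|B\setminus A|} = (1-1)^{|C\setminus A|}$ via the binomial theorem) is exactly the standard Möbius-inversion computation over the Boolean lattice that the cited reference supplies, and it is complete as written.
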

We will fix $z$ and apply the Lemma for $g(A) \defeq P(A,z)$. 
Accordingly, we define $Q(A,z) \defeq \sum_{\udo B \supseteq A} P(B,z)$. Note that $Q(A,z)$ is the probability that bad coalescence in $\cR$ is triggered by 
a pair $(x,y) \in \cX^2$ with $D(x) \cap D(y) \supseteq A$ and $x|_{D(z)}^{} = y|_{D(z)}^{} = z$, prior to any other coalescence or mutation event. 

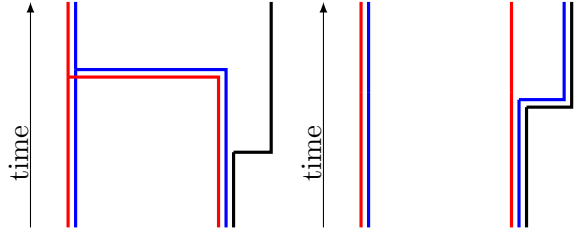
\begin{figure}[t]
\centering
\subfloat{
\begin{tikzpicture}
\draw[very thick, color=red] (0,0) -- (0,2); 
\draw[very thick, color=blue] (0.1,0) -- (0.1,2.1);
\draw[very thick, color=red, opacity = 0.4] (0,2) -- (0,3);
\draw[very thick, color=blue, opacity = 0.4] (0.1,2.1) -- (0.1,3);
\draw[very thick, color=red] (2,0) -- (2,2) -- (0,2);
\draw[very thick, color=blue] (2.1,0) -- (2.1,2.1) -- (0.1,2.1);
\draw[very thick, color=black] (2.2,0) -- (2.2,1);
\draw[very thick, color=black, opacity = 0.4] (2.2,1) -- (2.7,1) -- (2.7,3);
%Time arrow
\draw[-latex] (-0.5,0) -- (-0.5,3); 
\node[anchor=east, rotate=90] at (-0.66,1.5) {time};
\end{tikzpicture}
}
\subfloat{
\begin{tikzpicture}
\draw[very thick, color=red] (0,0) -- (0,1.8);
\draw[very thick, color=blue] (0.1,0) -- (0.1,1.8);
\draw[very thick, color=red, opacity=0.4] (0,1.8) -- (0,3);
\draw[very thick, color=blue, opacity=0.4] (0.1,1.8) -- (0.1,3);
\draw[very thick, color=red] (2,0) -- (2,1.8);
\draw[very thick, color=blue] (2.1,0) -- (2.1,1.7);
\draw[very thick, color=black] (2.2,0) -- (2.2,1.6);
\draw[very thick, color=blue, opacity=0.4] (2.1,1.7) -- (2.7,1.7) -- (2.7,3);
\draw[very thick, color=black, opacity=0.4] (2.2,1.6) -- (2.8,1.6) -- (2.8,3);
\draw[very thick, color=red, opacity=0.4] (2,1.8) -- (2,3);
\draw[-latex] (-0.5,0) -- (-0.5,3); 
\node[anchor=east, rotate=90] at (-0.66,1.5) {time};
\end{tikzpicture}

}
\caption{\label{fig:trackingtwoparticles}
Left: a coalescence of a pair of particles initially observed at $\{1,2\}$ and $\{1,2,3\}$, both supersets of $A = \{1,2\}$, triggering a bad coalescence in $\cR$.
Right: the split of the particle initially observed at $\{1,2,3\}$ makes this impossible. 
Sites $1$,$2$ and $3$ are color coded red, blue and black. Observed particles are opaque, while non-observed particles are transparent. 
}
\end{figure}

In order to compute $Q(A,z)$, we track the evolution (in $\cR$) of a fixed (ordered) pair of particles of types $y$ and $z$ as in the event defining $G(A)$; see Remark~\ref{rmk:asparticlesystems}. Note that this entails that both of them are observed at (some superset of) $A$. Each of the two particles suffers independent fragmentation due to recombination. Whenever possible, we keep tracking the unique particle that is still observed at a (now smaller) superset of $A$; see Fig.~\ref{fig:trackingtwoparticles}. Otherwise, if any of the two particles is hit by a recombination event that splits $A$, which happens at rate $2 \varrho \bar r_A^{}$ with 
\begin{equation*}
\bar r_A^{} \defeq \sum_{\substack{\cB \in \bP(S) \\ \cB|_A^{} \neq \{A\} }} r_\cB^{},
\end{equation*}
we terminate our observation. We also terminate the observation if any of the two particles is either hit by a mutation or coalesces  with a third particle. Note that such events occur with bounded rate. Clearly, the observed pair triggers the event defining $Q(A,z)$ if and only if the termination is due 
to coalescence of the pair, which happens at rate $1$ and hence with probability
\begin{equation*}
\frac{1}{2 \varrho \bar r_A^{} + 1 + \cO(1)} = \frac{1}{2 \varrho \bar r_A^{}} + \cO(\varrho^{-2}).
\end{equation*}
Now all we need to do is count the total number of such pairs, which is $2  \binom{\nu^{\supseteq A, D(z)}(z)}{2}$; keep in mind that we consider \emph{ordered} pairs, hence the factor of $2$. Therefore, 
\begin{equation*}
Q(A,z) = \frac{1}{\varrho \bar r_A^{}}\binom{\nu^{\supseteq A, D(z)}(z)}{2} + \cO(\varrho^{-2})
\end{equation*}
and Lemma~\ref{lem:exclincl} yields
\begin{equation} \label{F1x}
P(B,z)  = \sum_{\udo A \supseteq B} \frac{(-1)^{|A \setminus B|}}{\varrho \cdot \bar r_A^{}} \binom{\nu^{\supseteq A, D(z)}(z)}{2} + \cO(\varrho^{-2}).
\end{equation}
Inserting this into Eq.~\eqref{atleasttwosites} yields for $|D(z)| \geqslant 2$ that
\begin{equation*}
K(z) = \varrho^{-1} \sum_{D(z) \subseteq \udo A \subseteq S} \frac{(-1)^{|A \setminus D(z)|}}{\bar r_A^{}} \binom{\nu^{\supseteq A, D(z)}(z)}{2}  + \cO(\varrho^{-2}).
\end{equation*}
For $|D(z)| = 1$, we get from Eq.~\eqref{onesite}
\begin{equation*}
\begin{split}
K(z) &=  - \varrho^{-1} \sum_{D(z) \subsetneq \udo B \subseteq S} \sum_{B \subseteq \udo A \subseteq S}  \frac{(-1)^{|A \setminus B|}}{\bar r_A^{}} \binom{\nu^{\supseteq A, D(z)}(z)}{2}  + \cO(\varrho^{-2}) \\
&= \varrho^{-1} \sum_{D(z) \subsetneq \udo A \subseteq S}   \frac{(-1)^{|A \setminus D(z)|}}{\bar r_A^{}} \binom{\nu^{\supseteq A, D(z)}(z)}{2} 
\sum_{D(z) \subsetneq \udo B \subseteq A} (-1)^{|B|} + \cO(\varrho^{-2}) \\
&= \varrho^{-1} \sum_{D(z) \subsetneq \udo A \subseteq S}   \frac{(-1)^{|A \setminus D(z)|}}{\bar r_A^{}} \binom{\nu^{\supseteq A, D(z)}(z)}{2} + \cO(\varrho^{-2}),
\end{split}
\end{equation*}
where the last step follows from an elementary calculation using the binomial theorem. 
Finally, we have by Eq.~\eqref{zerosites}
\begin{equation*}
\begin{split}
K(\epsilon) &= \varrho^{-1} \sum_{\substack{\udo B \subseteq S \\ |B| \geqslant 2 }} \sum_{B \subseteq \udo A \subseteq S}  \frac{(-1)^{|A \setminus B|}}{\bar r_A^{}} \binom{\nu^{\supseteq A, D(\epsilon)}(\epsilon)}{2}  + \cO(\varrho^{-2}) \\
&= \varrho^{-1} \sum_{\substack{\udo A \subseteq S \\ |A| \geqslant 2}}  \frac{(-1)^{|A \setminus D(\epsilon)|}}{\bar r_A^{}} \binom{\nu^{\supseteq A, D(\epsilon)}(\epsilon)}{2}  \sum_{\substack{\udo B \subseteq S \\ |B| \geqslant 2 }} (|B| -1 )   + \cO(\varrho^{-2}) \\
&= \varrho^{-1} \sum_{\substack{\udo A \subseteq S \\ |A| \geqslant 2}}  \frac{(-1)^{|A \setminus D(\epsilon)|}}{\bar r_A^{}} \binom{\nu^{\supseteq A, D(\epsilon)}(\epsilon)}{2}  + \cO(\varrho^{-2}).
\end{split}
\end{equation*}
Again, the last equality is due to an elementary application of the binomial theorem. After exchanging $z \to x$, we see that the coefficients $K(x)$ 
in Eq.~\eqref{roughsamplingformula} are given exactly as in Eq.~\eqref{samplingformula}, which concludes the proof of Theorem~\ref{thm:maintheorem}.

\section*{Apppendix}
In the main text, we used the following elementary result about excursions in random walks
\begin{lemma}\label{lem:excursions}
Let $(X_t)_{t \geqslant 0}$ be a (continuous-time) random walk on $\ZZ$ with upward steps of bounded size and downward steps of size $1$. Assuming that the rates for the upward transitions stay constant and that the rate for a downward step scales linearly with $\varrho^{}$, we have that for any $C > 0$
\begin{equation*}
\PP( X \textnormal{ makes at least } k \textnormal{ upward steps before hitting } 0  \mid 0 < X_0 \leqslant C  ) = \cO(\varrho^{-k}),
\end{equation*}
where the implicit constant may depend on $C$.
\end{lemma}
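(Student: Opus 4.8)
The plan is to discard the continuous-time structure and argue on the embedded jump chain, since the statement concerns only the \emph{number} of upward steps before absorption at $0$. Write $0 = U_0 < U_1 < U_2 < \cdots$ for the successive jump indices at which an upward step occurs, let $T_0$ be the first jump index at which the walk equals $0$, and let $n$ be a uniform upper bound on the size of an upward step; the goal is to show $\PP(U_k < T_0 \mid 0 < X_0 \leqslant C) = \cO(\varrho^{-k})$. Two elementary observations drive the argument. First, since every downward step has size exactly $1$ and upward steps only increase the position, after $j$ upward steps have occurred the current position is at most $C + jn$; in particular, from a positive position $m$ the walk is absorbed in exactly $m$ further steps if no upward step intervenes. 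Second, from any positive state the probability that the next jump is upward equals the upward rate divided by the total rate; the numerator is bounded by a constant and the denominator is of order $\varrho$ (because the downward rate scales linearly with $\varrho$), so this probability is at most $c'/\varrho$ for a constant $c'$ independent of the state and of $\varrho$.

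Combining the two observations, I would estimate the conditional probability of surviving from one upward step to the next: on $\{U_j < T_0\}$ the position just after the $j$-th upward step is some $m \leqslant C + jn$, and for the $(j+1)$-st upward step to occur before $T_0$ it must occur within the first $m$ jumps thereafter (each of which is downward until an upward step happens), so a union bound over those $m$ jumps gives
\begin{equation*}
\PP\big( U_{j+1} < T_0 \mid \cF_{U_j} \big)\, \one_{\{U_j < T_0\}} \;\leqslant\; (C + jn)\,\frac{c'}{\varrho}.
\end{equation*}
Iterating this bound via the tower property from $j = k-1$ down to $j = 0$ (and using $\PP(U_0 < T_0) \leqslant 1$) yields
\begin{equation*}
\PP(U_k < T_0 \mid 0 < X_0 \leqslant C) \;\leqslant\; \Big( \frac{c'}{\varrho} \Big)^{k} \prod_{j=0}^{k-1} (C + jn) \;=\; \cO(\varrho^{-k}),
\end{equation*}
with implied constant $(c')^{k} \prod_{j=0}^{k-1}(C+jn)$ depending on $C$ (and on $k$ and $n$) but not on $\varrho$, as claimed.

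The only point requiring a little care — the closest thing to an obstacle — is making the bookkeeping of the position between consecutive upward steps airtight: one must use that downward steps have size exactly $1$, so that, absent an upward step, absorption happens after a deterministic number of steps bounded in terms of the number of upward steps so far; this is precisely what turns the geometric waiting time for an upward step into an event of probability $\cO(\varrho^{-1})$ over the relevant horizon. The remaining ingredients are a routine union bound and an induction on $k$.
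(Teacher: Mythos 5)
Your proof is correct and follows essentially the same route as the paper's: both arguments rest on the observations that the position after $j$ upward steps is at most the initial height plus $j$ times the maximal jump size (so, absent a further upward step, absorption occurs within that many downward unit steps) and that the per-jump probability of an upward move is $\cO(\varrho^{-1})$, and then multiply over the $k$ inter-upward-step gaps. The paper phrases the gap bound via i.i.d.\ geometric random variables and a maximum, whereas you use a union bound and the tower property with the slightly sharper per-gap horizon $C+jn$; this is a cosmetic difference only.
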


\begin{proof}
Let $M$ be an upper bound for the size of the upward jumps. Assuming that $X$ has not hit $0$ immediately after the $k$-th upward step. Then, its maximal height at that time is (assuming that we \emph{only} see upward steps until then) $C + kM$. This means that in between any of the $k$ upward steps, there can not have been more than $C + kM$ downward steps.  Now, the distribution of the number $Y_i$ of downward moves between the $i-1$th and the $i$th upward move is given by i.i.d. geometric random variables with success probability
$1 - \cO(\varrho^{-1})$. Therefore, 
\begin{equation*}
\begin{split}
\PP( X \textnormal{ makes at least } & k \textnormal{ upward steps before hitting } 0  \mid 0 < X_0 \leqslant C  )  \\
& \leqslant \PP(\max_{1 \leqslant i \leqslant k} Y_i < C + kM) = (1 - (1 - \cO(\varrho^{-1}))^{C + kM})^k  = \cO(\varrho^{-k}).
\end{split}
\end{equation*}
Keep in mind that $C,k$ and $M$ are all constant.
\end{proof}

\section*{Acknowledgements}

It is a pleasure to thank Ellen Baake, Martina Favero, Paul Jenkins and Jere Koskela for stimulating discussions. Funded by the Deutsche Forschungsgemeinschaft (DFG, German Research Foundation) -- Project-ID 317210226 -- SFB 1283 and Project-ID 519713930.

\newpage

\end{document}